\theoremstyle{plain}
\numberwithin{equation}{section}
\newtheorem*{theoA}{Theorem A}
\newtheorem*{theoB}{Theorem B}
\newtheorem*{theoC}{Theorem C}
\newtheorem*{theoD}{Theorem D}
\newtheorem{theo}{Theorem}[section]
\newtheorem{lem}{Lemma}[section]
\newtheorem{cor}{Corollary}[section]
\newtheorem{exm}{Example}[section]
\newtheorem{defi}{Definition}[section]
\newtheorem{rem}{Remark}[section]
\newcommand{\ol}{\overline}
\newcommand{\be}{\begin{equation}}
\newcommand{\ee}{\end{equation}}
\newcommand{\beas}{\begin{eqnarray*}}
\newcommand{\eeas}{\end{eqnarray*}}
\newcommand{\bea}{\begin{eqnarray}}
\newcommand{\eea}{\end{eqnarray}}
\theoremstyle{definition}
\theoremstyle{remark}
\begin{document}

\title[ Further results on the uniqueness of meromorphic functions ...\\
  ]{Further results on the uniqueness of meromorphic functions  and their derivative counterpart sharing one or two sets }
\date{}

\author[A. Banerjee and B. Chakraborty ]{ Abhijit Banerjee$^{1}$  and Bikash Chakraborty$^{2}$ }
\date{}

\address{$^{1}$ Department of Mathematics, University of Kalyani, West Bengal 741235, India.}
\email{abanerjee\_kal@yahoo.co.in, abanerjee\_kal@rediffmail.com
}
\address{$^{2}$ Department of Mathematics, University of Kalyani, West Bengal 741235, India.}
\email{bikashchakraborty.math@yahoo.com, bikashchakrabortyy@gmail.com}

\keywords{ Meromorphic function, Unique Range Sets, shared sets, weighted sharing}

\subjclass[2010]{30D35}

\thanks{ The first author's research work is supported by the Council Of  Scientific and Industrial Research, Extramural Research Division, CSIR Complex, Pusa, New Delhi-110012, India, under the sanction project no. 25(0229)/14/EMR-II. \\
 The second author's research work is supported by the Department of Science and Technology, Govt. of India under the sanction order DST/INSPIRE Fellowship/2014/IF140903.}

\begin{abstract}
In this paper we prove a number of results concerning uniqueness of a meromorphic function as well as its derivative sharing one or two sets. In particular, we deal with the specific question raised in \cite{13.1}, \cite{13.2}, \cite{12.1} and ultimately improve the result of Banerjee-Bhattacharjee \cite{3.3}.
\end{abstract}
\maketitle
\section{Introduction and Definitions}
In this paper, we assume that readers familiar with the basic Nevanlinna theory(\cite{8}).
By $\mathbb{C}$ and $\mathbb{N}$ we mean the set of complex numbers and set of positive integers respectively. Let $f$ and $g$ be two non-constant meromorphic functions and let $a$ be a finite complex number. We say that $f$ and $g$ share the value $a$ CM (counting multiplicities), provided that $f-a$ and $g-a$ have the same zeros with the same multiplicities. Similarly, we say that $f$ and $g$ share the value $a$-IM (ignoring multiplicities), provided that $f-a$ and $g-a$ have the same set of zeros, where the multiplicities are not taken into account.
In addition we say that $f$ and $g$ share $\infty$ CM (IM), if $1/f$ and $1/g$ share $0$ CM (IM). \par
We now recall some well-known definitions in the literature of the uniqueness of meromorphic functions sharing sets as it will be pertinent with the follow up discussions.
\begin{defi}
For a non-constant meromorphic function $f$ and any set $S\subset \mathbb{C}\bigcup\{\infty\}$, we define
$$E_{f}(S)=\bigcup\limits_{a \in S}\{(z,p) \in \mathbb{C}\times\mathbb{N}~ |~ f(z)=a ~with~ multiplicity~ p\},$$
$$\ol{E}_{f}(S)=\bigcup\limits_{a \in S}\{z \in \mathbb{C}~ |~ f(z)=a,counting ~without~ multiplicity\}.$$
Two meromorphic functions $f$ and $g$ are said to share the set $S$ counting multiplicities(CM), if $E_{f}(S)=E_{g}(S)$. They are said to share $S$ ignoring multiplicities(IM), if $\ol{E}_{f}(S)=\ol{E}_{g}(S).$
\end{defi}
\begin{defi} A set $S\subset \mathbb{C}\bigcup\{\infty\}$ is called a unique range set for meromorphic functions (in short, URSM), if for any two non-constant meromorphic functions $f$ and $g$ the condition $E_{f}(S)=E_{g}(S)$ implies $f\equiv g$.\par
Similarly we can define unique range set for entire functions( URSE).
\end{defi}
\begin{defi} A set $S\subset \mathbb{C}\bigcup\{\infty\}$ is called a unique range set for meromorphic functions  ignoring multiplicities (in short, URSM-IM), if for any two non-constant meromorphic functions $f$ and $g$ the condition $\ol{E}_{f}(S)=\ol{E}_{g}(S)$ implies $f\equiv g$.\par
Similarly we can define unique range set for entire functions ignoring multiplicities ( URSE-IM).
\end{defi}
We further recall the notion of weighted sharing of sets appeared in the literature in 2001 (\cite{9}). As far as relaxations of the nature of sharing of the sets are concerned, this notion has a remarkable influence.
\begin{defi} (\cite{9}) Let $k$ be a nonnegative integer or infinity. For $a\in\mathbb{C}\cup\{\infty\}$ we denote by $E_{k}(a;f)$ the set of all $a$-points of $f$, where an $a$-point of multiplicity $m$ is counted $m$ times if $m\leq k$ and $k+1$ times if $m>k$. If $E_{k}(a;f)=E_{k}(a;g)$, we say that $f,g$ share the value $a$ with weight $k$.\end{defi}

We write $f$, $g$ share $(a,k)$ to mean that $f$, $g$ share the value $a$ with weight $k$. Clearly if $f$, $g$ share $(a,k)$, then $f$, $g$ share $(a,p)$ for any integer $p$, $0\leq p<k$. Also we note that $f$, $g$ share a value $a$ IM or CM if and only if $f$, $g$ share $(a,0)$ or $(a,\infty)$ respectively.\par
\begin{defi}\cite{9} Let $S$ be a set of distinct elements of $\mathbb{C}\cup\{\infty\}$ and $k$ be a nonnegative integer or $\infty$. We denote by $E_{f}(S,k)$,  the set $\bigcup\limits_{a\in S}E_{k}(a;f)$. If $E_{f}(S,k)=E_{g}(S,k)$, then we say $f$, $g$ share the set $S$ with weight $k$. \end{defi}

\begin{defi}(\cite{10})
  A polynomial $P$ in $\mathbb{C}$, is called a uniqueness polynomial for meromorphic (entire) functions, if for any two non-constant meromorphic (entire) functions $f$ and $g$, $P(f)\equiv P(g)$ implies $f\equiv g$. We say $P$ is a UPM (UPE) in brief.
 \end{defi}
 \begin{defi}(\cite{4}, \cite {6})
Let $P(z)$ be a polynomial such that $P^{'}(z)$ has mutually $t$ distinct  zeros given by $d_{1}, d_{2}, \ldots, d_{t}$ with multiplicities $q_{1}, q_{2}, \ldots, q_{t}$ respectively then $P(z)$ is said to satisfy critical injection property if $P(d_i)\not =P(d_j)$ for $i\not=j$ where $i,j\in \{1,2,\cdot\cdot\cdot,t\}$ .
\end{defi}
 From the definition it is obvious that  $P(z)$ is injective on the set of distinct zeros of $P^{'}(z)$  which are known as critical points of $P(z)$. Furthermore any polynomial $P(z)$ satisfying this property is called critically injective polynomial. Thus a critically injective polynomial has at-most one multiple zero.\par
  To this end, we recall two definitions.
 \begin{defi} (\cite{2.1}) Let $z_{0}$ be a zero of $f-a$ of multiplicity $p$ and a zero of $g-a$ of multiplicity $q$. We denote by $\ol N_{L}(r,a;f)$ the counting function of those $a$-points of $f$ and $g$ where $p>q\geq 1$, by $N^{1)}_{E}(r,a;f)$ the counting function of those $a$-points of $f$ and $g$ where $p=q=1$ and by $\ol N^{(2}_{E}(r,a;f)$ the counting function of those $a$-points of $f$ and $g$ where $p=q\geq 2$, each point in these counting functions is counted only once. In the same way we can define $\ol N_{L}(r,a;g),\; N^{1)}_{E}(r,a;g),\; \ol N^{(2}_{E}(r,a;g).$\end{defi}
\begin{defi} (\cite{2.1}) Let $f$, $g$ share a value $a$ IM. We denote by $\ol N_{*}(r,a;f,g)$ the reduced counting function of those $a$-points of $f$ whose multiplicities differ from the multiplicities of the corresponding $a$-points of $g$.\par

Clearly $\ol N_{*}(r,a;f,g)\equiv\ol N_{*}(r,a;g,f)$ and $\ol N_{*}(r,a;f,g)=\ol N_{L}(r,a;f)+\ol N_{L}(r,a;g)$.
\end{defi}
In 1976 Gross(\cite{7}, Question 6) proposed a problem concerning the uniqueness of entire functions that share sets of distinct elements instead of values as follows :\\
{\bf Question A :} {\it Can one find two finite set $S_{j}$ for $j=1,2$  such that any two non-constant entire functions $f$ and $g$ satisfying $E_{f}(S_{j})=E_{g}(S_{j})$ for $j=1,2$  must be identical ?}\par
In (\cite{7}), Gross also asked : \enquote{If the answer to Question 6 is affirmative, it would be interesting
to know how large both sets would have to be.}\par
Yi (\cite{13}) and independently Fang-Xu (\cite{5.1}) gave a positive answer to Question A. In fact, Yi (\cite{13}) proved that the smallest cardinalities of $S_1$ and $S_2$ are $1$ and $3$ respectively, where $S_1$ and $S_2$ are two finite sets such that any two non-constant entire functions
$f$ and $g$ satisfying $E(S_{j},f)= E(S_{j},g)$ for $j = 1, 2$ must be identical. And till today this is the best result. \par
Now it is natural to ask the following question :\par
{\bf Question B :}(\cite{13.1},\cite{13.2},\cite{12.1}) {\it Can one find two finite sets $S_{j}$ $(j=1,2)$ such that any two non-constant meromorphic functions $f$ and $g$ satisfying $E_{f}(S_{j},\infty)=E_{g}(S_{j},\infty)$ for $j=1,2$  must be identical ?}\par

In 1994, Yi (\cite{12.2}) proved that there exist two finite sets $S_1$ (with 2 elements) and $S_2$ (with 9 elements) such that any two
non-constant meromorphic functions $f$ and $g$ satisfying $E_{f}(S_{j},\infty)=E_{g}(S_{j},\infty)$ for $j=1,2$ must be identical.\par
In (\cite{11}), Li-Yang proved that there exist two finite sets $S_1$ (with 1 element) and $S_2$ (with 15 elements) such that any two non-constant meromorphic functions $f$ and $g$ satisfying $E_{f}(S_{j},\infty)=E_{g}(S_{j},\infty)$ for $j=1,2$ must be identical.\par
 In (\cite{5.0}), Fang-Guo proved that there exist two finite sets $S_1$ (with 1 element) and $S_2$ (with 9 elements) such
that any two non-constant meromorphic functions $f$ and $g$ satisfying $E_{f}(S_{j},\infty)=E_{g}(S_{j},\infty)$ for $j=1,2$ must be identical.\par
Also in 2002, Yi (\cite{13.1}) proved that there exist two finite sets $S_1$ (with 1 element) and $S_2$ (with 8 elements) such that any two non-constant meromorphic functions $f$ and $g$ satisfying $E_{f}(S_{j},\infty)=E_{g}(S_{j},\infty)$ for $j=1,2$ must be identical.\par
In 2008, the first author (\cite{2.1}) improved the result of Yi (\cite{13.1}) by relaxing the nature of sharing the range sets by the notion of weighted sharing. He established that there exist two finite sets $S_1$ (with 1 element) and $S_2$ (with 8 elements) such that any two non-constant meromorphic functions $f$ and $g$ satisfying $E_{f}(S_{1},0)=E_{g}(S_{1},0)$ and $E_{f}(S_{2},2)=E_{g}(S_{2},2)$  must be identical.\par
So the natural query would be whether there exists similar types of unique range sets corresponding to the derivatives of two meromorphic functions. But in this particular direction the number of results are scanty.
The following uniqueness results have been obtained when the derivatives of meromorphic functions sharing one or two are studied by the researchers.
\begin{theoA} (\cite{5.2,12.1}) Let $S_{1}=\{z:z^{n}+az^{n-1}+b=0\}$ and $S_{2}=\{\infty\}$, where $a$, $b$ are nonzero constants such that $z^{n}+az^{n-1}+b=0$ has no repeated root and $n\;(\geq 7)$, $k$ be two positive integers. Let $f$ and $g$ be two non-constant meromorphic functions such that $E_{f^{(k)}}(S_{1},\infty)=E_{g^{(k)}}(S_{1},\infty)$ and $E_{f}(S_{2},\infty)=E_{g}(S_{2},\infty)$ then $f^{(k)}\equiv g^{(k)}$.
\end{theoA}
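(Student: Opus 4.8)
The plan is to set $F:=f^{(k)}$ and $G:=g^{(k)}$ and to work throughout with the polynomial $P(z)=z^{n}+az^{n-1}+b$, whose derivative $P'(z)=nz^{n-2}\bigl(z+\tfrac{a(n-1)}{n}\bigr)$ has exactly the two distinct zeros $0$ and $w_{0}:=-\tfrac{a(n-1)}{n}$. Since $a\neq 0$ forces $w_{0}\neq 0$ and $w_{0}+a=a/n\neq 0$, we get $P(w_{0})-b=w_{0}^{n-1}(w_{0}+a)\neq 0=P(0)-b$, so the two critical values of $P$ are distinct; together with the hypothesis that $P$ has only simple zeros this makes $P$ critically injective, and hence, as $n\geq 7$, a uniqueness polynomial for meromorphic functions by Fujimoto's criterion. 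The first step is to rewrite the hypotheses: $E_{f^{(k)}}(S_{1},\infty)=E_{g^{(k)}}(S_{1},\infty)$ says precisely that $P(F)$ and $P(G)$ share $0$ CM, while $E_{f}(\{\infty\},\infty)=E_{g}(\{\infty\},\infty)$ means $f$ and $g$ share $\infty$ CM; since a pole of $f$ of order $p$ produces a pole of $f^{(k)}$ of order $p+k$, the functions $F$ and $G$---and hence $P(F)$ and $P(G)$---share $\infty$ CM as well. As $P(F)/P(G)$ is then free of zeros and poles, $P(F)=e^{\gamma}P(G)$ for some entire function $\gamma$. (We may assume $F$ and $G$ are nonconstant.)

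The second step is to extract the quantitative information. Applying the second fundamental theorem to $F$ with respect to $\infty$ and the $n$ simple zeros $c_{1},\dots,c_{n}$ of $P$, and symmetrically to $G$, and using the CM sharing to identify $\sum_{i}\ol N(r,c_{i};F)$ with $\sum_{i}\ol N(r,c_{i};G)$, should yield $T(r,F)\asymp T(r,G)$ and $S(r,F)=S(r,G)=:S(r)$, the crude bound $T(r,e^{\gamma})\leq n\bigl(T(r,F)+T(r,G)\bigr)+O(1)$, and---via the factorisation $P(z)-b=z^{n-1}(z+a)$---the relations among $T(r,F)$, $T(r,G)$ and the reduced counting functions of the zeros of $F$, $F+a$, $G$, $G+a$ that are needed below.

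The heart of the matter is to show $e^{\gamma}$ is constant. Differentiating $P(F)=e^{\gamma}P(G)$ gives
$$\frac{P'(F)F'}{P(F)}-\frac{P'(G)G'}{P(G)}=\gamma',$$
and since $\gamma'$ is entire, the lemma on the logarithmic derivative gives $T(r,\gamma')=m(r,\gamma')=S(r)$; substituting this back and comparing the divisors of zeros and poles of the two sides---where the factor $z^{n-2}$ of $P'$ forces large local orders at the zeros of $F$ and of $G$---one should find that for $n\geq 7$ the counting functions cannot be reconciled unless $\gamma'\equiv 0$. I expect this step, namely excluding a genuinely transcendental $e^{\gamma}$ using only $n\geq 7$ and the sharing data, to be the main obstacle; it is where the hypothesis on $n$ is really used. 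Once $e^{\gamma}\equiv c$ with $c$ constant, the possibility $c\neq 1$ is eliminated next: from $F^{n-1}(F+a)+b=c\bigl(G^{n-1}(G+a)+b\bigr)$, counting the zeros of $F$, $F+a$, $G$ and $G+a$ and invoking the second fundamental theorem once more produces, for $n\geq 7$, more Nevanlinna deficiency than is permitted unless $c=1$. Hence $P(F)\equiv P(G)$, and since $P$ is a uniqueness polynomial for meromorphic functions this gives $F\equiv G$, that is, $f^{(k)}\equiv g^{(k)}$.
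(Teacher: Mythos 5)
Theorem A is a quoted result (from Fang--Lahiri and Yi--Lin); the paper under review contains no proof of it, so your attempt can only be measured against the arguments in those sources, and against correctness. There it fails at its final, essential step. You assert that $P(z)=z^{n}+az^{n-1}+b$ is a uniqueness polynomial for meromorphic functions ``by Fujimoto's criterion'' because it is critically injective and $n\geq 7$. Critical injectivity is indeed correct, but Fujimoto's criterion (Lemma 3.2 of this paper) is an if-and-only-if condition: since $P'(z)=nz^{n-2}\bigl(z+\tfrac{a(n-1)}{n}\bigr)$ we have $t=2$ with multiplicities $q_{1}=n-2$, $q_{2}=1$, and the required inequality $q_{1}q_{2}>q_{1}+q_{2}$ reads $n-2>n-1$, which is false for every $n$ (the listed special case for $t=2$ also fails, as $\min\{q_{1},q_{2}\}=1$). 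So $P$ is provably \emph{not} a UPM: for any non-constant meromorphic $h$, the functions $G=-a\,\frac{h^{n-1}-1}{h^{n}-1}$ and $F=hG$ satisfy $P(F)\equiv P(G)$ while $F\not\equiv G$. Consequently your concluding inference ``$P(F)\equiv P(G)$, $P$ is a UPM, hence $F\equiv G$'' is invalid, and no largeness assumption on $n$ alone can repair it.

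This is not a cosmetic defect, because the case $P(f^{(k)})\equiv P(g^{(k)})$ is exactly where the remaining hypotheses must be used: that $f,g$ share $\infty$ CM (hence so do $f^{(k)},g^{(k)}$) and that $F,G$ are $k$-th derivatives, so that $\overline{N}(r,\infty;F)\leq\frac{1}{k+1}N(r,\infty;F)$. The standard argument sets $h=F/G$, derives $G=-a(h^{n-1}-1)/(h^{n}-1)$ when $h$ is non-constant, and then applies the second fundamental theorem to $h$ at the roots of unity, using the pole information to force $h$ to be constant, whence $h^{n}=h^{n-1}=1$ and $F\equiv G$; your proposal never carries out this analysis. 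In addition, your middle step---showing that the entire factor $e^{\gamma}$ in $P(F)=e^{\gamma}P(G)$ must be constant---is only sketched (``one should find\ldots''), and this is precisely where the published proofs invest most of their effort (via the auxiliary function $H$ or Yi's lemmas, leading to the dichotomy $P(F)\equiv P(G)$ or $P(F)P(G)\equiv b^{2}$, the latter excluded by a counting argument that again uses the shared poles and the derivative structure). As it stands, the proposal therefore has an incomplete central step and a demonstrably false final step.
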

In 2010, Banerjee-Bhattacharjee (\cite{3.2}) improved the above results in the following way :
\begin{theoB} (\cite{3.2})
Let $S_{i}$, $i=1,2$ and $k$ be given as in {\em Theorem A}. Let $f$ and $g$ be two non-constant meromorphic functions  such that $E_{f^{(k)}}(S_{1},2)=E_{g^{(k)}}(S_{1},2)$ and $E_{f}(S_{2},1)=E_{g}(S_{2},1)$ then $f^{(k)}\equiv g^{(k)}$.
\end{theoB}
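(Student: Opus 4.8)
The plan is to follow the now-standard route for results of this kind, with the derivative structure entering through the fact that every pole of $f^{(k)}$ (resp. $g^{(k)}$) has order at least $k+1\ge2$. Put $P(w)=w^{n}+aw^{n-1}+b$ and
$$F=-\frac{1}{b}\bigl(f^{(k)}\bigr)^{n-1}\bigl(f^{(k)}+a\bigr),\qquad G=-\frac{1}{b}\bigl(g^{(k)}\bigr)^{n-1}\bigl(g^{(k)}+a\bigr),$$
so that $F-1=-\frac{1}{b}P(f^{(k)})$ and likewise for $G$. Since $P$ has no repeated zero, a zero of $P(f^{(k)})$ of multiplicity $m$ is exactly a point at which $f^{(k)}$ takes one of the $n$ roots of $P$ with multiplicity $m$; hence $E_{f^{(k)}}(S_{1},2)=E_{g^{(k)}}(S_{1},2)$ is precisely the assertion that $F$ and $G$ share $(1,2)$. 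Similarly, the weight-$1$ sharing of $S_{2}=\{\infty\}$ says that $f$ and $g$ have the same poles, with $\ol N_{*}(r,\infty;f,g)$ under control, and since $F$ is a polynomial of degree $n$ in $f^{(k)}$ whose poles coincide as a set with those of $f$, one has $T(r,F)=nT(r,f^{(k)})+S(r,f)$ and the pole terms of $F$ and $G$ essentially match. I would also record at this point the Milloux/logarithmic-derivative inequalities that bound $\ol N(r,0;f^{(k)})$, $\ol N(r,-a;f^{(k)})$ and $N_{2}(r,0;F)$ in terms of $T(r,f^{(k)})$, $\ol N(r,\infty;f)$ and truncated counting functions of $f$, since their sharp use is what makes the threshold $n\ge7$ attainable. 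Finally I would set aside the degenerate situation in which $f^{(k)}$ or $g^{(k)}$ is a constant; from the sharing hypotheses and $n\ge7>2$ the other derivative is then constant too, and this case is treated directly, so from here on $f^{(k)},g^{(k)}$ are nonconstant.

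Next, introduce the usual auxiliary function
$$H=\left(\frac{F''}{F'}-\frac{2F'}{F-1}\right)-\left(\frac{G''}{G'}-\frac{2G'}{G-1}\right)$$
and split into the cases $H\not\equiv0$ and $H\equiv0$. If $H\not\equiv0$, then $H$ has only simple poles, situated among the zeros of $F'$ and $G'$ off the $1$-points, the common poles, and the $1$-points where $F$ and $G$ have unequal multiplicities; the sharing $(1,2)$ forces every such exceptional $1$-point to have multiplicity at least $3$ on both sides, so it enters only through a small fraction of $N(r,1;F)+N(r,1;G)$, and the weight-$1$ sharing of $S_{2}$ similarly controls $\ol N_{*}(r,\infty;f,g)$. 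Feeding this into the second fundamental theorem for $F$ and for $G$, expressing $N_{2}(r,0;F)$ and $N_{2}(r,\infty;F)$ through the zero/pole data of $f^{(k)}$, and invoking the derivative inequalities above, one arrives at an inequality whose left side carries the factor $n$ while the right side is a bounded multiple of $T(r,f^{(k)})+T(r,g^{(k)})$ plus $S(r)$; comparing coefficients, this is impossible once $n\ge7$. Making this estimate tight enough — in particular controlling $\ol N_{*}(r,1;F,G)$ via the weight-$2$ hypothesis and bounding $\ol N(r,0;f^{(k)})+\ol N(r,-a;f^{(k)})$ by the Milloux-type inequalities rather than crudely — is the main obstacle.

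If instead $H\equiv0$, integrating twice yields a bilinear relation $\frac{1}{F-1}=\frac{A}{G-1}+B$ with $A\neq0$. When $B\neq0$, or when $B=0$ and $A\neq1$, the functions $F$ and $G$ are tied by a nontrivial M\"obius transformation; reading off from it where $F$ and $G$ take the values $0$ and $\infty$, and using that a level set $\{G=c\}$ with $c\neq0$ splits into at least $n-1$ distinct values of $g^{(k)}$ while $\ol N(r,\infty;F)=\ol N(r,\infty;f)$, the second fundamental theorem applied to $g^{(k)}$ forces $n\le5$, contradicting $n\ge7$. Only two sub-cases survive: $B=0$, $A=1$, i.e. $F\equiv G$; and $A=B=-1$, i.e. $FG\equiv1$. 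In the second, $(f^{(k)})^{n-1}(f^{(k)}+a)(g^{(k)})^{n-1}(g^{(k)}+a)$ would be a nonzero constant, which is impossible the moment $f$ has a pole; hence $f$ and $g$ are entire, so $f^{(k)}$ and $g^{(k)}$ omit both $0$ and $-a$ and are therefore constant — a case already disposed of. So $F\equiv G$, that is $P(f^{(k)})\equiv P(g^{(k)})$.

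Finally I would deduce $f^{(k)}\equiv g^{(k)}$ from $(f^{(k)})^{n-1}(f^{(k)}+a)\equiv(g^{(k)})^{n-1}(g^{(k)}+a)$. Setting $h=f^{(k)}/g^{(k)}$, this identity reads $g^{(k)}(h^{n}-1)=a(1-h^{n-1})$, whence $g^{(k)}=-a(1+h+\cdots+h^{n-2})/(1+h+\cdots+h^{n-1})$. If $h$ were nonconstant it could not omit all $n-1$ nontrivial $n$-th roots of unity, so $1+h+\cdots+h^{n-1}$ has a zero; since this polynomial and $1+h+\cdots+h^{n-2}$ share no zero, every such zero is a pole of $g^{(k)}$ of the same order, and that order is $\ge k+1\ge2$ because $g^{(k)}$ is a $k$-th derivative — hence $h$ attains each nontrivial $n$-th root of unity only with multiplicity $\ge2$, and the second fundamental theorem applied to $h$ with these $n-1$ values gives $n-3\le\frac{n-1}{2}$, i.e. $n\le5$, a contradiction. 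Thus $h$ is constant, and then $g^{(k)}(h^{n}-1)=a(1-h^{n-1})$ with $g^{(k)}$ nonconstant forces $h^{n}=h^{n-1}=1$, so $h=1$; that is, $f^{(k)}\equiv g^{(k)}$, as required.
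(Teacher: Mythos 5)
First, a point of reference: this paper never proves Theorem B --- it is quoted from Banerjee--Bhattacharjee \cite{3.2} as background --- so there is no in-paper proof to compare with; the closest material is the machinery of Sections 3--4 (the functions $F$, $G$, $H$, the weighted-sharing Lemma 3.5 and the $H\equiv 0$ Lemma 3.7), which is exactly the machinery your outline invokes. Your route --- pass to $F=-\frac{1}{b}(f^{(k)})^{n-1}(f^{(k)}+a)$ and $G$ likewise so that the set-sharing becomes $F,G$ sharing $(1,2)$; split on $H\not\equiv 0$ versus $H\equiv 0$; in the latter case reduce via the M\"{o}bius relation to $F\equiv G$ or $FG\equiv 1$, kill $FG\equiv 1$ using the shared poles and Picard, and finish from $F\equiv G$ with $h=f^{(k)}/g^{(k)}$, exploiting that every pole of $g^{(k)}$ has multiplicity at least $k+1\geq 2$ so that the second fundamental theorem applied to $h$ at the $n-1$ nontrivial $n$-th roots of unity forces $n\leq 5$ --- is the standard path for results of this type, and the $H\equiv 0$ and $F\equiv G$ portions as you present them are essentially sound.

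The genuine gap is in the case $H\not\equiv 0$, which is the heart of the theorem, and you concede it yourself (\enquote{making this estimate tight enough \dots is the main obstacle}). You never derive the decisive inequality; you only assert that the second fundamental theorem plus weighted-sharing bookkeeping \enquote{is impossible once $n\geq 7$}. But the entire content of Theorem B, as opposed to Theorem A, is that the threshold $n\geq 7$ survives when CM sharing is weakened to weight $2$ for $S_{1}$ and weight $1$ for $S_{2}$, and that is decided precisely by the constants in this estimate. Concretely, one must combine $\overline{N}(r,1;F\mid =1)\leq N(r,\infty;H)$ with the second fundamental theorem applied to both $F$ and $G$, a lemma of the type of Lemma 3.5 to absorb $\overline{N}_{*}(r,1;F,G)$ using $l=2$, the control of $\overline{N}_{*}(r,\infty;f,g)$ coming from the weight-$1$ pole sharing, and the saving $\overline{N}(r,\infty;f^{(k)})\leq \frac{1}{k+1}N(r,\infty;f^{(k)})$ from the pole multiplicities, and then verify that the resulting coefficient is strictly smaller than what the left-hand side carries when $n\geq 7$. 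Until that computation is carried out, what you have is a plausible strategy rather than a proof. A secondary flaw: your claim that the case of constant $f^{(k)}$, $g^{(k)}$ is \enquote{treated directly} is not justified --- if both derivatives are constants lying outside $S_{1}$ and $f,g$ are polynomials of degree at most $k$, every sharing hypothesis holds vacuously while $f^{(k)}\not\equiv g^{(k)}$ --- so this degenerate situation must be excluded or handled explicitly rather than waved away.
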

\begin{theoC}(\cite{3.2})
Let $S_{i}$, $i=1,2$ be given as in {\em Theorem A}. Let $f$ and $g$ be two non-constant meromorphic functions  such that $E_{f^{(k)}}(S_{1},3)=E_{g^{(k)}}(S_{1},3)$ and $E_{f}(S_{2},0)=E_{g}(S_{2},0)$ then $f^{(k)}\equiv g^{(k)}$.
\end{theoC}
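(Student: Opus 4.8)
The plan is to set $F=f^{(k)}$ and $G=g^{(k)}$ and to reduce the statement to a two-function uniqueness problem of Nevanlinna type. Since $f$ and $g$ share $\infty$ IM, so do $F$ and $G$ (a pole of $f$ of order $p$ is a pole of $F$ of order $p+k$, and likewise for $g$), and by Milloux's inequality together with the lemma on the logarithmic derivative, $T(r,F)$ and $T(r,G)$ are comparable with $T(r,f)$ and $T(r,g)$ up to terms majorised by $\ol N(r,\infty;f)$. Writing $P(z)=z^{n}+az^{n-1}+b$ and setting
\[
\mathcal F=\frac{F^{n-1}(F+a)}{-b},\qquad \mathcal G=\frac{G^{n-1}(G+a)}{-b},
\]
one has $P(F)=0\Leftrightarrow\mathcal F=1$ and $P(G)=0\Leftrightarrow\mathcal G=1$, so the hypothesis $E_{f^{(k)}}(S_{1},3)=E_{g^{(k)}}(S_{1},3)$ says exactly that $\mathcal F$ and $\mathcal G$ share $(1,3)$. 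Here $T(r,\mathcal F)=nT(r,F)+S(r,F)$, $\ol N(r,\infty;\mathcal F)=\ol N(r,\infty;F)$ and $\ol N(r,0;\mathcal F)=\ol N(r,0;F)+\ol N(r,-a;F)$, with the obvious analogues for $\mathcal G$. After disposing of the trivial possibility that $F$ or $G$ is constant, the goal is to prove $\mathcal F\equiv\mathcal G$, from which $F\equiv G$ will follow.

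Introduce
\[
\mathcal H=\left(\frac{\mathcal F''}{\mathcal F'}-\frac{2\mathcal F'}{\mathcal F-1}\right)-\left(\frac{\mathcal G''}{\mathcal G'}-\frac{2\mathcal G'}{\mathcal G-1}\right),
\]
and first suppose $\mathcal H\not\equiv0$. Then $m(r,\mathcal H)=S(r)$, and each common simple $1$-point of $\mathcal F$ and $\mathcal G$ is a zero of $\mathcal H$, so $N^{1)}_{E}(r,1;\mathcal F)\le N(r,\infty;\mathcal H)+S(r)$. One then bounds $N(r,\infty;\mathcal H)$: poles of $\mathcal H$ can occur only at zeros of $\mathcal F'$ and $\mathcal G'$ that are not $1$-points, at $1$-points where the two multiplicities differ, and at common poles of $\mathcal F$ and $\mathcal G$ of differing multiplicity; using the weight-$3$ sharing on $S_{1}$ this is majorised, up to $S(r)$, by $\ol N_{*}(r,1;\mathcal F,\mathcal G)+\ol N(r,0;F)+\ol N(r,-a;F)+\ol N(r,0;G)+\ol N(r,-a;G)+\ol N_{*}(r,\infty;F,G)$. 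Substituting into the second main theorem applied to $\mathcal F$ and to $\mathcal G$, decomposing $\ol N(r,1;\mathcal F)$ through the weighted-sharing bookkeeping, and converting to $T(r,F)$ and $T(r,G)$ via $T(r,\mathcal F)=nT(r,F)+S(r,F)$, I expect to reach an inequality of the shape $(n-c)\bigl(T(r,F)+T(r,G)\bigr)\le S(r,F)+S(r,G)$ with some $c<7$, contradicting $n\ge7$. Hence $\mathcal H\equiv0$.

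If $\mathcal H\equiv0$, integrating twice gives a M\"obius relation
\[
\frac{1}{\mathcal F-1}=\frac{A}{\mathcal G-1}+B,\qquad A\ne0,
\]
equivalently $\mathcal F=\dfrac{(B+1)\mathcal G+(A-B-1)}{B\mathcal G+(A-B)}$. I would then run through the standard subcases ($B\ne0$, $A\ne B+1$; $B\ne0$, $A=B+1$; $B=0$, $A\ne1$; $B=0$, $A=1$). In each of the first three, comparing the zero and pole divisors of the two sides — using that $\mathcal F$ is of degree $n$ in $F$ with an $(n-1)$-fold zero above each zero of $F$, that $F$ and $G$ share $\infty$ IM, and counting via the second main theorem — produces a contradiction when $n\ge7$; so only $A=1$, $B=0$ survives, i.e. $\mathcal F\equiv\mathcal G$, that is $F^{n-1}(F+a)\equiv G^{n-1}(G+a)$. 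Finally, putting $h=F/G$: if $h$ is constant the relation forces $h^{n}=h^{n-1}=1$, hence $h=1$ and $F\equiv G$; if $h$ is non-constant, writing $G$ as a rational function of $h$ and applying the second main theorem (with $n\ge7$ more than enough) yields a contradiction. Therefore $F\equiv G$, i.e. $f^{(k)}\equiv g^{(k)}$.

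The step I expect to be the genuine obstacle is the pole bookkeeping in the case $\mathcal H\not\equiv0$. Because $S_{2}=\{\infty\}$ is shared only IM and the $k$-th derivative inflates pole orders, $F$ and $G$ have common poles of possibly unequal multiplicity, so $\ol N_{*}(r,\infty;F,G)$ genuinely occurs and must be absorbed — for instance by bounding it by $\ol N(r,\infty;f)\le\frac{1}{k+1}N(r,\infty;F)$ and the like — and arranging that the final numerical estimate still closes at the threshold $n\ge7$ rather than a larger value; this is precisely where the weight-$3$ hypothesis on $S_{1}$ is spent.
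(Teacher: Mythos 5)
Note first that this paper does not actually prove Theorem C: it is imported verbatim from Banerjee--Bhattacharjee \cite{3.2}, and the only proofs given here are of Theorems 2.1 and 2.2 for a different polynomial. Your framework -- passing to $\mathcal F=\frac{F^{n-1}(F+a)}{-b}$, $\mathcal G=\frac{G^{n-1}(G+a)}{-b}$ with $F=f^{(k)}$, $G=g^{(k)}$, translating the set-sharing into $(1,3)$-sharing of $\mathcal F,\mathcal G$, splitting on $\mathcal H\not\equiv 0$ versus $\mathcal H\equiv 0$, and finishing with a M\"{o}bius analysis and the $h=F/G$ argument -- is indeed the same general machinery used both in \cite{3.2} and in this paper's own proofs, so the strategy is the right one.

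The genuine gap is exactly the step you defer. In the case $\mathcal H\not\equiv 0$ you only assert that the bookkeeping will close with some $c<7$, but if one executes your outline with precisely the tools you name (the bound $N^{1)}_{E}(r,1;\mathcal F)\leq N(r,\infty;\mathcal H)$, the weight-$l$ inequality of Lemma \ref{b4}, and $\ol N(r,\infty;f^{(k)})\leq\frac{1}{k+1}N(r,\infty;f^{(k)})$), the zeros of $F$ and the preimages of the critical value of $P$ other than $0$ are counted twice (once in the second main theorem, once inside $N(r,\infty;\mathcal H)$), and the computation closes only under roughly $n>6+\frac{3}{k+1}$ together with the pole terms $\ol N_{*}(r,\infty;F,G)\leq\frac{1}{2(k+1)}\{T(r,F)+T(r,G)\}$; for $k=1$ (and $k=2$) this demands $n\geq 8$ and so misses the stated threshold $n\geq 7$. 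In other words, the theorem at $n\geq7$ does not follow from the generic weighted-sharing estimates you invoke; the published proof has to spend the weight $3$ and the hypothesis $k\geq1$ in a finer way at precisely this point, and none of that refinement is supplied. The same criticism applies, less severely, to the $\mathcal H\equiv0$ case (asserted contradictions in the M\"{o}bius subcases, including the $\mathcal F\mathcal G\equiv1$ possibility, are not carried out) and to the final implication $F^{n-1}(F+a)\equiv G^{n-1}(G+a)\Rightarrow F\equiv G$, where the needed observation is that poles of $G=g^{(k)}$ have multiplicity at least $k+1\geq 2$, forcing every $u$-point of $h=F/G$ (with $u^{n}=1$, $u\neq1$) to be multiple before the second main theorem is applied to $h$; this is standard but should be stated, since it is where $k\geq1$ enters that part of the argument.
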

In 2011, Banerjee-Bhattacharjee (\cite{3.3}) further improved the above results in the following manner :
\begin{theoD} (\cite{3.3})  Let $S_{i}$, $i=1,2$ and $k$ be given as in {\em Theorem A}. Let $f$ and $g$ be two non-constant meromorphic functions  such that $E_{f^{(k)}}(S_{1},2)=E_{g^{(k)}}(S_{1},2)$ and $E_{f}(S_{2},0)=E_{g}(S_{2},0)$ then $f^{(k)}\equiv g^{(k)}$.
\end{theoD}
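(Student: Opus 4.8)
The plan is to run the weighted-sharing machinery in the standard way, working throughout with $F=f^{(k)}$, $G=g^{(k)}$ and the polynomial $P(z)=z^{n}+az^{n-1}+b$, whose $n$ zeros $\alpha_{1},\dots,\alpha_{n}$ are simple and whose only critical points are $0$ (of multiplicity $n-2$) and $\beta:=-(n-1)a/n$ (simple), with $P(0)=b\neq0$ and $P(\beta)\neq0$ because $P$ has no repeated root. Since the zeros of $P(F)$ are exactly the $\alpha_{i}$-points of $F$ with the same multiplicities, the assumption $E_{f^{(k)}}(S_{1},2)=E_{g^{(k)}}(S_{1},2)$ says precisely that $\mathcal F:=P(F)$ and $\mathcal G:=P(G)$ share $(0,2)$, while $E_{f}(S_{2},0)=E_{g}(S_{2},0)$ says $f,g$ share $\infty$ IM. Hence $F,G$ share $\infty$ IM and — the only feature of the derivatives we shall need — every pole of $F$ (and of $G$) has multiplicity at least $k+1$, so $\ol N(r,\infty;F)\le\frac1{k+1}N(r,\infty;F)$, and likewise for $G$. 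One may assume $F,G$ non-constant, the contrary case being disposed of directly.

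I would then introduce the usual auxiliary function
\[
\mathcal H=\Bigl(\frac{\mathcal F''}{\mathcal F'}-\frac{2\mathcal F'}{\mathcal F}\Bigr)-\Bigl(\frac{\mathcal G''}{\mathcal G'}-\frac{2\mathcal G'}{\mathcal G}\Bigr)
\]
and split into two cases. \emph{Case 1: $\mathcal H\not\equiv0$.} A local computation shows that a simple zero of $\mathcal F$ which is also a simple zero of $\mathcal G$ is a zero of $\mathcal H$, so $N_{E}^{1)}(r,0;\mathcal F)\le N(r,\infty;\mathcal H)+S(r)$, the proximity term of $\mathcal H$ being $S(r)$. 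The poles of $\mathcal H$ occur only at zeros of $\mathcal F,\mathcal G$ with unequal multiplicities, at poles of $F,G$, at the $0$- and $\beta$-points of $F,G$ (where $P'(F),P'(G)$ force extra zeros into $\mathcal F',\mathcal G'$), and at the remaining zeros of $F',G'$, each counted once; this bounds $N(r,\infty;\mathcal H)$ by $\ol N_{*}(r,0;\mathcal F,\mathcal G)$, $\ol N_{*}(r,\infty;F,G)$, $\ol N(r,0;F)+\ol N(r,\beta;F)+\ol N(r,0;G)+\ol N(r,\beta;G)$ and the extra zeros of $F',G'$. Feeding this into the second main theorem for $F$ and for $G$ against the $n+1$ values $\alpha_{1},\dots,\alpha_{n},\infty$ — absorbing the extra zeros of $F',G'$ via the ramification term, and using that weight $2$ makes every $\ol N_{*}$-term cost multiplicity $\ge3$ while every pole costs multiplicity $\ge k+1$ — one is led to an inequality $(n-1)\bigl(T(r,F)+T(r,G)\bigr)\le c\bigl(T(r,F)+T(r,G)\bigr)+S(r)$ with $c<n-1$ once $n\ge7$, a contradiction. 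Thus $\mathcal H\equiv0$.

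\emph{Case 2: $\mathcal H\equiv0$.} Integrating twice gives $\dfrac{1}{P(F)}=\dfrac{A}{P(G)}+B$ with constants $A\neq0$, $B$. If $B\neq0$, then $P(G)=\dfrac{A\,P(F)}{1-B\,P(F)}$ and symmetrically $P(F)=\dfrac{P(G)}{A+B\,P(G)}$, so the poles of $P(F)$ (resp. $P(G)$) as functions of $z$ are the $\gamma$-points of $G$ (resp. $F$) for the roots $\gamma$ of $P(z)+A/B$ (resp. $P(z)-1/B$); since $\ol N(r,\infty;P(F))=\ol N(r,\infty;F)\le\frac1{k+1}T(r,F)$ while the second main theorem makes those preimage-counting functions grow like a positive multiple of the characteristic, $n\ge7$ yields a contradiction — the degenerate sub-cases (when $-A/B$ or $1/B$ is a critical value of $P$) being excluded by noting that they would force $F,G$ to have disjoint pole sets, hence, since $f,g$ share $\infty$, to be entire, and then constant by Picard. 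So $B=0$, i.e.\ $P(F)=c\,P(G)$ with $c=1/A$. If $c\neq1$ then $F^{n-1}(F+a)=c\,G^{n-1}(G+a)+(c-1)b$, and the zeros of $F$ and of $F+a$ are exactly the $w_{i}$-points of $G$ for the roots $w_{i}$ of $w^{n-1}(w+a)=(1-c)b/c$; the second main theorem then gives $(n-3)T(r,G)\le2T(r,G)+S(r)$, so $n\le5$, a contradiction. Hence $P(F)\equiv P(G)$.

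Finally, with $\phi=F/G$ the identity $P(F)=P(G)$ reads $F^{n-1}(F+a)=G^{n-1}(G+a)$, i.e.\ $G(\phi^{n}-1)=a(1-\phi^{n-1})$. If $\phi$ is non-constant, then $G=\dfrac{a(1-\phi^{n-1})}{\phi^{n}-1}$, and all of its poles — which, since $F,G$ share $\infty$, are precisely the poles of $F$ — lie at the $\zeta$-points of $\phi$ with $\zeta^{n}=1$, $\zeta\neq1$; as each such pole has multiplicity $\ge k+1$, every $\zeta$-point of $\phi$ has multiplicity $\ge k+1$, whence $\ol N(r,\zeta;\phi)\le\frac1{k+1}N(r,\zeta;\phi)$. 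Applying the second main theorem to $\phi$ with these $n-1$ values yields $(n-3)T(r,\phi)\le\frac{n-1}{k+1}T(r,\phi)+S(r)$, which is impossible for $n\ge6$. So $\phi$ is constant, and then $\phi^{n}=\phi^{n-1}=1$ forces $\phi\equiv1$, i.e.\ $F\equiv G$, that is $f^{(k)}\equiv g^{(k)}$. The main obstacle is Case 1: keeping the estimate for $N(r,\infty;\mathcal H)$ and for the auxiliary zeros of $F',G'$ tight enough that the interplay of weight $2$ with the poles of multiplicity $\ge k+1$ produces exactly the threshold $n\ge7$ (this is also why the weight on $S_{2}$ can be lowered to $0$, poles of a $k$-th derivative being automatically multiple); the sub-case analysis eliminating $B\neq0$ in Case 2 is the next most delicate point.
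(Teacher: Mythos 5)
Your overall architecture (the auxiliary function $\mathcal H$ for $P(f^{(k)})$, $P(g^{(k)})$ sharing $(0,2)$; the case $\mathcal H\equiv 0$ reduced by integration to a M\"{o}bius relation, then elimination of $B\neq 0$ and $c\neq 1$, and finally $P(F)\equiv P(G)\Rightarrow F\equiv G$ via $\phi=F/G$) is the standard route for this circle of results, and your Case 2 is essentially sound: in fact your ``degenerate'' observation already settles all of $B\neq 0$, since $B\neq0$ makes the pole sets of $F$ and $G$ disjoint, while sharing $\{\infty\}$ makes them equal, so $f,g$ are entire and Picard applies; and in the sub-case $c\neq1$ the term $\overline N(r,\infty;G)$ you dropped only changes the bound to $(n-3)T(r,G)\le \frac{5}{2}T(r,G)+S(r)$, still contradictory for $n\geq 7$. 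Note also that this paper does not prove Theorem D; it quotes it from \cite{3.3}, so the comparison below is with the machinery of \cite{3.3} and with the analogous computations carried out here. The genuine gap is in your Case 1, exactly the step you wave through with ``one is led to an inequality \dots with $c<n-1$ once $n\ge7$.'' You declare that the only feature of the derivatives you will use is that poles of $F,G$ have multiplicity at least $k+1$, and your Case 1 bookkeeping uses the hypothesis $E_f(S_2,0)=E_g(S_2,0)$ only to list $\overline N_{*}(r,\infty;F,G)$ among the poles of $\mathcal H$. With those ingredients the count does not close at $7$: the weight-$2$ sharing lemma puts $\frac12\left(N(r,0;\mathcal F)+N(r,0;\mathcal G)\right)\le\frac n2 T(r)$ on the right, the $0$- and $\beta$-points of $F,G$ coming from $N(r,\infty;\mathcal H)$ contribute about $2T(r)$, and the pole terms about $\frac{3}{2(k+1)}T(r)$, so one only gets a contradiction for $n>6+\frac{3}{k+1}$, i.e. $n\ge 8$ when $k=1$ (and $n\ge 9$ with the cruder bound $\overline N(r,\infty;F)+\overline N(r,\infty;G)$ in place of $\overline N_{*}$). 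A cross-check inside this very paper: its Theorem 2.1 with $l=2$, $m=1$, $k\ge1$ --- same weight, same use of pole multiplicity, but no pole sharing --- requires $n\ge 9$, so multiplicity of poles alone cannot produce the threshold $7$.

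What is missing is a quantitative use of the shared set $S_2=\{\infty\}$. In \cite{3.3} one normalizes so that the shared zero set corresponds to the value $1$ (say $\mathcal F=-F^{n-1}(F+a)/b$, $\mathcal G=-G^{n-1}(G+a)/b$) and introduces a second auxiliary function of the type $V=\left(\frac{\mathcal F'}{\mathcal F-1}-\frac{\mathcal F'}{\mathcal F}\right)-\left(\frac{\mathcal G'}{\mathcal G-1}-\frac{\mathcal G'}{\mathcal G}\right)$, the pole analogue of the function $\Phi$ and Lemma \ref{b7} used in this paper for the shared value $0$ in Theorem \ref{thB5}. At a common pole of $f$ and $g$ each bracket has a zero of order at least $n(k+1)-1$, so if $V\not\equiv0$ then $(n(k+1)-1)\,\overline N(r,\infty;f)\le N(r,0;V)\le N(r,\infty;V)+S(r)$, and the poles of $V$ are controlled by the $0$- and $(-a)$-points of $F,G$ and the $\overline N_{*}$-terms. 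This makes every pole-counting term of size $O\!\left(\frac{1}{n(k+1)-1}T(r)\right)$ instead of $\frac{1}{k+1}T(r)$, and it is precisely this gain (together with a separate treatment of $V\equiv0$) that lowers the requirement to $n\ge7$ with weight $2$ on $S_1$ and weight $0$ on $S_2$. Without such a device your Case 1, as described, stalls at $n\ge 8$; so the proof is incomplete at its central quantitative step.
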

So far from the above discussions we see that for the two set sharing problems, the best result has been obtained when one set contain $8$ elements and the other set contain $1$ element. On the other hand, when derivatives of the functions are considered then the cardinality of one set can further be reduced to $7$.
So it will be natural query whether there can be a single result corresponding to uniqueness of the function sharing two sets which can accommodate the derivative counterpart of the main function as well under relaxed sharing hypothesis with smaller cardinalities than the existing results.
This is the motivation of the paper.
Our one result present in the paper improves all the preceding theorems stated so far {\it Theorems A-D} in some sense.
\section{Main Results}
Suppose for two positive integers $m$,$n$ we shall denote by $P(z)$ the following polynomial.
\bea\label{abcp1}
P(z)=z^{n}-\frac{2n}{n-m}z^{n-m}+\frac{n}{n-2m}z^{n-2m}+c,
\eea
where $c$ is any complex number satisfying $|c|\not=\frac{2m^2}{(n-m)(n-2m)}$ and $c\not=0,-\frac{1-\frac{2n}{n-m}+\frac{n}{n-2m}}{2}$.\par
Following theorems are the main results of the paper. In the first theorem we consider the uniqueness of meromorphic functions and its derivatives counterpart corresponding to single set sharing.

\begin{theo}\label{thB3}
Let $n(\geq 1),~m(\geq1),~k(\geq0)$ be three positive integers such that $m,n$ has no common factors. Let $S=\{z :P(z)=0\}$ where the polynomial $P(z)$ defined by  \ref{abcp1}. Let $f$ and $g$ be two non-constant meromorphic function satisfying $E_{f^{(k)}}(S,l)=E_{g^{(k)}}(S,l)$. If one of the following conditions holds:
\begin{enumerate}
\item $l\geq2$ and $n>\max\{ 2m+4+\frac{4}{k+1},4m+1\}$,
\item $1= l$ and $n> \max\{2m+4.5+\frac{4.5}{k+1},4m+1\}$,
\item $l=0$ and  $n> \max\{2m+7+\frac{7}{k+1},4m+1\}$
\end{enumerate}
then  $f^{(k)} \equiv g^{(k)}$.
\end{theo}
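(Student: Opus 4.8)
The plan is to run the standard ``$H$-function'' machinery for set-sharing problems, organised around the arithmetic of $P$. First I would record the relevant properties of $P$. Differentiating \eqref{abcp1} gives $P'(z)=nz^{\,n-2m-1}(z^{m}-1)^{2}$, so the zeros of $P'$ are $0$ (of multiplicity $n-2m-1$) and the $m$-th roots of unity (each of multiplicity $2$), while $P(z)-c=z^{\,n-2m}(z^{m}-\alpha)(z^{m}-\beta)$ with $\alpha,\beta$ distinct and nonzero (distinctness because the discriminant $-4nm^{2}/((n-m)^{2}(n-2m))$ is negative, nonvanishing because $\alpha\beta=n/(n-2m)\ne0$). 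Since $\gcd(m,n)=1$ the map $\omega\mapsto\omega^{n}$ permutes the $m$-th roots of unity, and the identity $1-\tfrac{2n}{n-m}+\tfrac{n}{n-2m}=\tfrac{2m^{2}}{(n-m)(n-2m)}$ gives $P(\omega)=\tfrac{2m^{2}}{(n-m)(n-2m)}\,\omega^{n}+c$; hence the hypotheses $c\ne0$ and $|c|\ne\tfrac{2m^{2}}{(n-m)(n-2m)}$ make all critical values of $P$ nonzero (so $S$ has exactly $n$ elements) and, automatically, pairwise distinct (so $P$ is critically injective). Together with $P'$ having $m+1$ distinct zeros (and with the remaining restriction on $c$, which disposes of the borderline case $m=1$), this makes $P$ a uniqueness polynomial for meromorphic functions, so it suffices to prove $P(f^{(k)})\equiv P(g^{(k)})$.

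Next I would put $F=1-\tfrac1c P(f^{(k)})$ and $G=1-\tfrac1c P(g^{(k)})$ (the degenerate possibilities that $f^{(k)}$ or $g^{(k)}$ is constant being excluded directly beforehand). Then $T(r,F)=nT(r,f^{(k)})+S(r,f)$ (and likewise for $G$), the hypothesis $E_{f^{(k)}}(S,l)=E_{g^{(k)}}(S,l)$ says exactly that $F$ and $G$ share $(1,l)$, every pole of $f$ of order $p$ is a pole of $f^{(k)}$ of order $p+k$ and of $F$ of order $n(p+k)$ --- so that $\overline N(r,\infty;F)=\overline N(r,\infty;f)\le\tfrac1{k+1}N(r,\infty;f^{(k)})$, which is the source of the $\tfrac1{k+1}$ terms in the statement --- and every zero of $F$ lying over $f^{(k)}=0$ has multiplicity at least $n-2m$, which exceeds $2m$ because $n>4m+1$. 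Introduce
\[
H=\left(\frac{F''}{F'}-\frac{2F'}{F-1}\right)-\left(\frac{G''}{G'}-\frac{2G'}{G-1}\right).
\]
If $H\equiv0$, two integrations give $\tfrac1{F-1}=\tfrac{A}{G-1}+B$ with $A\ne0$, so $F$ is a Möbius function of $G$; running through the sub-cases ($B\ne0,\ A\ne B$; $B\ne0,\ A=B$; $B=0,\ A\ne1$) one finds that in each of them one of the values $0,1,\infty$ is assumed by $F$ (equivalently by $f^{(k)}$) with a multiplicity pattern that, fed into the second main theorem and weighed against $T(r,F)=nT(r,f^{(k)})+S$, contradicts $n>4m+1$ --- the conditions on $c$ beyond $c\ne0$ being precisely what rules out the degenerate identifications. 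Only $B=0,\ A=1$ survives, that is $P(f^{(k)})\equiv P(g^{(k)})$, and the first paragraph then finishes the proof.

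If $H\not\equiv0$, the lemma on logarithmic derivatives gives $m(r,H)=S(r)$, while the poles of $H$ are confined to the zeros of $F'$ and $G'$ lying off $\{F=G=1\}$, the multiple poles of $F$ and $G$, and the $1$-points at which the multiplicities of $F$ and $G$ disagree. Counting these, bounding $\overline N(r,1;F)$ by the weighted-sharing estimates of \cite{2.1}, and substituting into the second main theorem applied to $F$ and to $G$ --- together with $\overline N(r,\infty;F)\le\tfrac1{k+1}N(r,\infty;f^{(k)})$ and its analogue for $G$, the high multiplicity of the zeros of $F$ and $G$ over $f^{(k)}=0$ and $g^{(k)}=0$, and the three standard bounds for $\overline N_{*}(r,1;F,G)$ according as $l\ge2$, $l=1$ or $l=0$ --- yields
\[
\Bigl(n-\bigl(2m+4+\tfrac4{k+1}\bigr)\Bigr)\bigl(T(r,f^{(k)})+T(r,g^{(k)})\bigr)\le S(r),
\]
with $2m+4$ and $4$ replaced by $2m+4.5$ and $4.5$ when $l=1$, and by $2m+7$ and $7$ when $l=0$. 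Since $n$ exceeds the stated maximum in every case this is impossible, so $H\not\equiv0$ cannot occur, and the theorem follows.

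The hard part is the bookkeeping of the last paragraph: extracting \emph{precisely} the constants $2m+4+\tfrac4{k+1}$, $2m+4.5+\tfrac{4.5}{k+1}$ and $2m+7+\tfrac7{k+1}$ requires a careful fusion of the weighted-sharing inequalities, the Milloux-type estimates relating $f^{(k)}$ to $f$ (which produce the $\tfrac1{k+1}$), and the divisibility by $n$ of the pole orders of $F$ together with the high zero-multiplicities forced by the shape of $P$; the half-integer constants in the case $l=1$ are the subtlest point. A secondary difficulty is checking the exhaustiveness of, and deriving the contradictions in, the Möbius sub-case analysis, where the auxiliary conditions on $c$ are indispensable.
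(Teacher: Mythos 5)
Your overall architecture is the same as the paper's: the same $F=1-\frac{1}{c}P(f^{(k)})$ and $G$, the same auxiliary function $H$, the dichotomy $H\equiv 0$ versus $H\not\equiv 0$, the M\"{o}bius/integration analysis ending in Fujimoto's uniqueness-polynomial criterion in the first case, and weighted sharing of $1$-points plus the second main theorem in the second. The substantive gap is in the quantitative step of the case $H\not\equiv 0$, which is exactly where the thresholds $2m+4+\frac{4}{k+1}$, $2m+4.5+\frac{4.5}{k+1}$, $2m+7+\frac{7}{k+1}$ must come from. If, as you describe, the second main theorem is applied to $F$ and to $G$ (i.e.\ with the three values $0,1,\infty$), then $\overline{N}(r,0;F)$ necessarily counts the points where $f^{(k)}$ equals one of the $2m$ distinct roots $\gamma_{1},\dots,\gamma_{2m}$ of $z^{2m}-\frac{2n}{n-m}z^{m}+\frac{n}{n-2m}=0$; these are in general \emph{simple} zeros of $F$, no multiplicity argument reduces them, and they cost about $2mT(r,f^{(k)})$ per function. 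Carrying your listed ingredients through on that basis yields a contradiction only for roughly $n>4m+4+\frac{4}{k+1}$ (and $n>6m+4+\frac{4}{k+1}$ if the ramification term is not exploited), not the stated bounds. The device you are missing is the one the paper uses: apply the second main theorem to $f^{(k)}$ and $g^{(k)}$ themselves, taking as target values the $n$ zeros of $P$ (whose reduced preimage counting function is exactly $\overline{N}(r,1;F)$) together with $0$, $\infty$ and the $m$-th roots of unity; then the $\gamma_{i}$-points never enter the count, the only terms to absorb are $\overline{N}(r,0;f^{(k)})\le T(r,f^{(k)})$, $\overline{N}(r,\infty;f^{(k)})\le\frac{1}{k+1}T(r,f^{(k)})$ and $\overline{N}(r,0;(f^{(k)})^{m}-1)\le mT(r,f^{(k)})$ (the last compensated by the extra $m$ gained on the left), and one arrives at $\bigl(\frac{n}{2}-m-2-\frac{2}{k+1}\bigr)\bigl(T(r,f^{(k)})+T(r,g^{(k)})\bigr)\le\bigl(\frac{3}{2}-l\bigr)\overline{N}_{*}(r,1;F,G)+S(r)$, after which your three bounds for $\overline{N}_{*}$ give the three cases. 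Your appeal to ``the high multiplicity of the zeros of $F$ over $f^{(k)}=0$'' does not address this, since the problematic zeros of $F$ are the $\gamma_{i}$-points, not those over $f^{(k)}=0$ (that multiplicity is only needed in the two-set theorem, via the function $\Phi$).

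Two smaller points. The third restriction on $c$, namely $c\neq-\frac{1}{2}\bigl(1-\frac{2n}{n-m}+\frac{n}{n-2m}\bigr)$, is not what makes $P$ a uniqueness polynomial in the case $m=1$: Fujimoto's criterion already applies once $n\ge 2m+4$ (for $m=1$ the inequality reads $2(n-3)>n-1$), with no condition on $c$ beyond simplicity of the zeros. That restriction is needed inside the $H\equiv 0$ analysis, in the affine subcase $F=\lambda G+\mu$ with $\frac{1-\lambda}{\lambda}=\xi$, to exclude $\lambda=-1$. Also, within your M\"{o}bius subcases the possibility $FG\equiv 1$ survives the generic omitted-value count and needs the separate multiplicity argument of Lemma \ref{bb3} (zeros and $\gamma_{i}$-points of $f^{(k)}$ are forced to be poles of $g$ of large order, fed into the second main theorem); your gloss is compatible with this but should be made explicit.
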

\begin{cor}Let $n(\geq 9),~m(=1),~k\geq1$ be three positive integers. Let $S=\{z :P(z)=0\}$ where the polynomial $P(z)$ defined by  \ref{abcp1}. Let $f$ and $g$ be two non-constant meromorphic function satisfying $E_{f^{(k)}}(S,2)=E_{g^{(k)}}(S,2)$. Then $f^{(k)} \equiv g^{(k)}$.
\end{cor}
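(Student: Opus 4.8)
The plan is to run the by-now-standard machinery for set-sharing uniqueness problems, the only new feature being the observation that forces the smaller cardinalities: since every pole of $f^{(k)}$ is a pole of $f$ of order at least $k+1$, one has $\ol N(r,\infty;f^{(k)})\le\frac{1}{k+1}N(r,\infty;f^{(k)})+O(1)\le\frac{1}{k+1}T(r,f^{(k)})+O(1)$, and likewise for $g^{(k)}$. First I would put $F=P(f^{(k)})$ and $G=P(g^{(k)})$. The restrictions on $c$ (namely $c\neq0$ and $|c|\neq\frac{2m^{2}}{(n-m)(n-2m)}$) guarantee that $P$ and $P'$ have no common zero: one computes $P'(z)=nz^{n-2m-1}(z^{m}-1)^{2}$, so the critical points of $P$ are $0$, with $P(0)=c$, and the $m$-th roots of unity $\zeta$, with $P(\zeta)=\zeta^{n}\frac{2m^{2}}{(n-m)(n-2m)}+c$; hence $P(z)=0$ has exactly $n$ simple roots and the hypothesis $E_{f^{(k)}}(S,l)=E_{g^{(k)}}(S,l)$ becomes the statement that $F$ and $G$ share $(0,l)$. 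Moreover $T(r,F)=nT(r,f^{(k)})+O(1)$, similarly for $G$, and since $\gcd(m,n)=1$ the values $P(\zeta)$ together with $P(0)$ are pairwise distinct, i.e.\ $P$ is critically injective.

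Next, introduce the usual auxiliary function
\[
H=\left(\frac{F''}{F'}-\frac{2F'}{F}\right)-\left(\frac{G''}{G'}-\frac{2G'}{G}\right),
\]
and split into the cases $H\not\equiv0$ and $H\equiv0$. The first case is the crux. Here $m(r,H)=S(r)$, while a local pole analysis shows that $N(r,H)$ is bounded by the $\ol N_{*}(r,0;F,G)$-part coming from the weighted sharing, by $\ol N(r,\infty;f^{(k)})+\ol N(r,\infty;g^{(k)})$, and by the zeros of $P'(f^{(k)})$ and $P'(g^{(k)})$ that are not zeros of $f^{(k)}$ resp.\ $g^{(k)}$ — the last of these being controlled via critical injectivity and the fact that $\ol N(r,0;f^{(k)})$, $\ol N(r,0;g^{(k)})$ are $O(T)$ with small coefficient. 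Feeding this bound into the second fundamental theorem for $f^{(k)}$ and $g^{(k)}$ taken with respect to $\infty$ and the $n$ roots of $P$, using $T(r,F)=nT(r,f^{(k)})$, and replacing every $\ol N(r,\infty;\cdot)$ by $\frac{1}{k+1}T(r,\cdot)$ via the fat-pole estimate, one is led to an inequality of the shape
\[
\bigl(n-c_{l}\bigr)\bigl\{T(r,f^{(k)})+T(r,g^{(k)})\bigr\}\le S(r),
\]
with $c_{l}=2m+4+\frac{4}{k+1}$, $2m+4.5+\frac{4.5}{k+1}$, $2m+7+\frac{7}{k+1}$ according as $l\ge2$, $l=1$, $l=0$; this contradicts the hypothesis on $n$. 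The companion requirement $n>4m+1$ is a technical one, keeping $n-2m$ positive and the multiplicity $n-2m-1$ of $0$ as a zero of $P'$ large enough for the counting arguments to close. Consequently $H\equiv0$.

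It then remains to analyse $H\equiv0$. Integrating twice gives a M\"obius relation $F\equiv\frac{AG+B}{CG+D}$ with $AD-BC\neq0$, and I would run through the standard subcases. Comparing zeros and poles of $F$ and $G$ together with their multiplicities — $F$ has poles exactly over the poles of $f^{(k)}$, of multiplicity a multiple of $n$, and zeros over the $n$ distinct roots of $P$, and likewise for $G$ — shows that a non-identity relation forces an impossible matching of multiplicities once $n>4m+1$; the two surviving degenerate relations are precisely what the excluded values $c=0$ and $c=-\tfrac12\bigl(1-\tfrac{2n}{n-m}+\tfrac{n}{n-2m}\bigr)$ rule out. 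Hence $F\equiv G$, i.e.\ $P(f^{(k)})\equiv P(g^{(k)})$. Finally, critical injectivity of $P$ together with the conditions imposed on $c$ make $P$ a uniqueness polynomial for meromorphic functions (Fujimoto's criterion for critically injective polynomials), so $P(f^{(k)})\equiv P(g^{(k)})$ yields $f^{(k)}\equiv g^{(k)}$. The Corollary is the case $m=1$, $l=2$, $k\ge1$: there $2m+4+\frac{4}{k+1}=6+\frac{4}{k+1}\le8$ and $4m+1=5$, so condition (1) holds whenever $n\ge9$. The step I expect to be the main obstacle is the case $H\not\equiv0$: obtaining the exact constant $c_{l}$ for each weight $l$ requires simultaneously controlling the $\ol N_{*}$ terms (where the weight enters), the zeros of $P'(f^{(k)})$ and $P'(g^{(k)})$, and the $\frac{1}{k+1}$ gain on the pole terms, and fitting them into the second fundamental theorem with no slack left over.
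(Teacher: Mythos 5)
Your proposal is correct and follows essentially the same route as the paper: the shared-set condition is translated into $P(f^{(k)})$, $P(g^{(k)})$ sharing a value with weight $l$, the auxiliary function $H$ is analysed, the second fundamental theorem combined with the $\frac{1}{k+1}$ saving on $\ol{N}(r,\infty;f^{(k)})$, $\ol{N}(r,\infty;g^{(k)})$ yields exactly the thresholds $2m+4+\frac{4}{k+1}$ (for $l\ge 2$) and $4m+1$, and the case $H\equiv 0$ is settled through the M\"obius relation, the restrictions on $c$, and Fujimoto's uniqueness-polynomial criterion, after which the corollary is the specialization $m=1$, $l=2$, $k\ge 1$, $n\ge 9$ of the general theorem. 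The only minor inaccuracy is that the degenerate relation $FG\equiv 1$ is excluded in the paper by a Nevanlinna counting argument (valid for $n\ge 5$), not by the excluded values of $c$, which instead guarantee that $P$ has only simple zeros and that $\lambda\neq -1$ in the linear subcase.
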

For $k=0$ in Theorem \ref{thB3} we get the following :
\begin{cor}Let $n(\geq 1),~m(\geq1)$ be two positive integers having no common factors. Let $S=\{z :P(z)=0\}$ where the polynomial $P(z)$ defined by  \ref{abcp1}. Let $f$ and $g$ be two non-constant meromorphic function satisfying $E_{f}(S,l)=E_{g}(S,l)$. If one of the following conditions holds:
\begin{enumerate}
\item $l\geq2$ and $n>\max\{2m+8,4m+1\}$,
\item $1= l$ and $n>\max\{2m+9,4m+1\}$,
\item $l=0$ and  $n>\max\{2m+14,4m+1\}$
\end{enumerate}
then  $f \equiv g$.
\end{cor}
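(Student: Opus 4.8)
The plan is to obtain this corollary as the special case $k=0$ of Theorem~\ref{thB3}. First I would note that $k=0$ is admissible there, since the only constraint imposed on $k$ in Theorem~\ref{thB3} is $k\geq 0$, and that for $k=0$ one has $f^{(k)}=f$ and $g^{(k)}=g$. Hence the hypothesis $E_{f}(S,l)=E_{g}(S,l)$ is literally the hypothesis $E_{f^{(k)}}(S,l)=E_{g^{(k)}}(S,l)$ of that theorem, and its conclusion $f^{(k)}\equiv g^{(k)}$ is literally the desired $f\equiv g$. The polynomial $P$ of \ref{abcp1}, the side conditions on its coefficient $c$, and the requirement that $m$ and $n$ have no common factor appear verbatim in both statements, so nothing has to be modified there.

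Second, I would simply check that putting $k=0$ converts the three numerical thresholds of Theorem~\ref{thB3} into those listed in the corollary: in case $(1)$, $\tfrac{4}{k+1}=4$, so $2m+4+\tfrac{4}{k+1}=2m+8$; in case $(2)$, $\tfrac{4.5}{k+1}=4.5$, so $2m+4.5+\tfrac{4.5}{k+1}=2m+9$; and in case $(3)$, $\tfrac{7}{k+1}=7$, so $2m+7+\tfrac{7}{k+1}=2m+14$. The entry $4m+1$ inside each maximum does not involve $k$ and is therefore unchanged. Thus conditions $(1)$--$(3)$ of the corollary coincide with conditions $(1)$--$(3)$ of Theorem~\ref{thB3} evaluated at $k=0$, and the corollary follows at once. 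There is no genuine obstacle here: the entire content of the statement is contained in Theorem~\ref{thB3}, and the only step is this elementary arithmetic simplification of the bounds.
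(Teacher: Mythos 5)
Your proposal is correct and coincides with the paper's own derivation: the paper states this corollary precisely as the case $k=0$ of Theorem~\ref{thB3}, and your arithmetic check that the thresholds $2m+4+\frac{4}{k+1}$, $2m+4.5+\frac{4.5}{k+1}$, $2m+7+\frac{7}{k+1}$ become $2m+8$, $2m+9$, $2m+14$ at $k=0$ is all that is needed.
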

The next theorem focus on the two set sharing problem.
\begin{theo}\label{thB5}
Let $n(> 4m+1),~m(\geq1),~k(\geq0)$ be three positive integers satisfying $\gcd\{m,n\}=1$. Let $S=\{z :P(z)=0\}$ where the polynomial $P(z)$ defined by  \ref{abcp1}. Let $f$ and $g$ be two non-constant meromorphic function satisfying $E_{f^{(k)}}(S,l)=E_{g^{(k)}}(S,l)$ and $E_{f^{(k)}}(0,q)=E_{g^{(k)}}(0,q)$ where $0\leq q < \infty$. If
\begin{enumerate}
\item $l\geq \frac{3}{2}+\frac{2}{n-2m-1}+\frac{1}{(n-2m)q+n-2m-1}$ and
\item  $n>2m+\frac{4}{k+1}+\frac{4}{(k+1)(n-2m-1)}+\frac{2}{(k+1)((n-2m)q+n-2m-1)}$
\end{enumerate}
then $f^{(k)}\equiv g^{(k)}$.
\end{theo}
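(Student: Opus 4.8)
The plan is to set $F=f^{(k)}$, $G=g^{(k)}$ (which we take to be non-constant); the hypotheses then say exactly that $P(F)$ and $P(G)$ share $(0,l)$ while $F$ and $G$ share $(0,q)$, and the goal is $F\equiv G$. Everything hinges on the factorisation $P'(z)=nz^{\,n-2m-1}(z^{m}-1)^{2}$: since $P(0)=c$, a zero of $F$ of multiplicity $p$ is a $c$-point of $P(F)$ of multiplicity $(n-2m)p$ and a zero of $(P(F))'$ of multiplicity $(n-2m)p-1\ge n-2m-1$. I would first record this, together with $T(r,P(F))=nT(r,F)+S(r,F)$, the derivative estimate $\ol N(r,\infty;f)\le\frac1{k+1}N(r,\infty;F)+S(r,f)$ (and its analogue for $g$, $G$), and the usual weighted-sharing bounds for the two shared pairs — in particular $\ol N_{*}(r,0;P(F),P(G))$ controlled by $\frac1{l+1}$, and the zeros of $F$ at which $F$ and $G$ have different multiplicities controlled by $\frac1{(n-2m)q+n-2m-1}$, using the multiplicity count above and the weight-$q$ sharing of $0$. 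This last point is where the second shared set $\{0\}$ is exploited.

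Next I would bring in the auxiliary function attached to the value $0$ common to $P(F)$ and $P(G)$,
\[
H=\left(\frac{(P(F))''}{(P(F))'}-\frac{2(P(F))'}{P(F)}\right)-\left(\frac{(P(G))''}{(P(G))'}-\frac{2(P(G))'}{P(G)}\right),
\]
and split the argument. Suppose $H\not\equiv0$. The poles of $H$ are then simple and confined to: the zeros of $P(F)$ (equivalently of $P(G)$) where the two multiplicities differ; the poles of $F$ and of $G$; and the $c$-points of $P(F)$ and $P(G)$, i.e. the zeros of $F$, of $G$, of $F^{m}-1$ and of $G^{m}-1$. Combining $m(r,H)=S(r)$, this pole count, the fact that $H$ vanishes at the common simple zeros of $P(F)$ and $P(G)$, the second main theorem, the divisibility by $n-2m$ that deflates the $c$-points coming from zeros of $F$, the weighted-sharing bounds, and the $\frac1{k+1}$ pole estimate, I would reach an inequality which after simplification reads essentially
\[
\Big(n-2m-\tfrac{4}{k+1}-\tfrac{4}{(k+1)(n-2m-1)}-\tfrac{2}{(k+1)((n-2m)q+n-2m-1)}\Big)\{T(r,F)+T(r,G)\}\le S(r,F)+S(r,G);
\]
this contradicts hypothesis (2) (hypothesis (1) on $l$ being what keeps the $\ol N_{*}$ contributions within the bound), so $H\equiv0$.

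When $H\equiv0$, integrating twice yields a Möbius relation $P(G)\equiv\dfrac{aP(F)+b}{c_{0}P(F)+d}$ with $ad-bc_{0}\ne0$. As in the proofs behind Theorems A--D, I would eliminate the degenerate alternatives — those forcing $P(F)P(G)\equiv\text{const}$, $P(F)+P(G)\equiv\text{const}$, and the remaining genuinely bilinear relations — one at a time by matching the pole divisors and the $c$-point divisors of the two sides: $n>4m+1$ makes the pole bookkeeping decisive, the shared $(0,q)$ again pins down the zeros of $F$ and $G$, and the three normalisation conditions on $c$ are exactly what rules the degenerate relations out — they give $P$ exactly $n$ simple roots, and together with $\gcd(m,n)=1$ and $A:=\tfrac{2m^{2}}{(n-m)(n-2m)}\ne0$ they make $P$ critically injective, its critical points being $0$ and the $m$-th roots of unity. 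Only $P(F)\equiv P(G)$ survives. Finally, $P$ being a critically injective polynomial of degree $n>4m+1$ with these critical points, it is a uniqueness polynomial for meromorphic functions, so $P(F)\equiv P(G)$ forces $F\equiv G$, that is $f^{(k)}\equiv g^{(k)}$.

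The main obstacle is the case $H\not\equiv0$: getting the pole count of $H$ sharp enough to land on the exact bound in (2) requires extracting the full $\frac1{k+1}$ gain from $F$, $G$ being $k$-th derivatives, weighing the $c$-points of $P(F)$ that come from zeros of $F$ by using both the divisibility by $n-2m$ and the weight $q$ (this is what produces the $((n-2m)q+n-2m-1)$ denominators), and not losing anything in the $\ol N_{*}$ terms that would inflate the constant $4$. The branch $H\equiv0$ is, by contrast, long but routine, its only real content being the case-by-case exclusion of the Möbius sub-cases using the conditions on $c$ and on $n$.
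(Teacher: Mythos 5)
Your proposal is correct and follows essentially the same route as the paper: since the paper's $F-1=-P(f^{(k)})/c$, your $H$ is literally the paper's $H$, your case $H\not\equiv 0$ lands on exactly the paper's inequality (\ref{m4}) via the second fundamental theorem and the weighted-sharing estimates (the only step you leave implicit is that the denominators $n-2m-1$ and $(n-2m)q+n-2m-1$ are extracted in the paper through the auxiliary function $\Phi=\frac{F'}{F-1}-\frac{G'}{G-1}$ of Lemma \ref{b7}, whose poles are what feed the extra $\ol N_{*}(r,1;F,G)$ terms absorbed by your hypothesis (1)), and your case $H\equiv 0$ is the paper's Lemma \ref{biku} (Möbius relation, elimination of the degenerate subcases using the conditions on $c$ and $n>4m+1$, then $P(f^{(k)})\equiv P(g^{(k)})$) followed by Fujimoto's uniqueness-polynomial criterion (Lemmas \ref{b1} and \ref{bd2}). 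No essential difference in method or gap worth noting.
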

The following  example shows that for the two set sharing case,  choosing the set $S_{1}$ with one element and $S_{2}$ with two elements Theorem \ref{thB5} ceases to hold.
\begin {exm}\label{abc2015.}
Let $S_{1}=\{a\}$ and $S_{2}=\{b,c\}$. Choose $f(z) =p(z)+(b-a)e^{z}$ and $g(z)=q(z)+(-1)^{k}(c-a)e^{-z}$, where $p(z)$ and $q(z)$ are polynomial of degree $k$ with the coefficient of $z^{k}$ in $p(z)$ and $q(z)$ is equal to $\frac{a}{k!}$.\par
Clearly $E_{f^{(k)}}(S_j)=E_{g^{(k)}}(S_j)$ for $j=1,2$  but $f^{(k)}\not\equiv g^{(k)}$.
\end{exm}
\begin{cor}Let $n(\geq 8),~m(=1)$ be two positive integers. Let $S=\{z :P(z)=0\}$ where the polynomial $P(z)$ defined by  \ref{abcp1}. Let $f$ and $g$ be two non-constant meromorphic function satisfying
\begin{enumerate}
\item $E_{f}(S,3)=E_{g}(S,3)$ and $E_{f}(0,0)=E_{g}(0,0)$, or
\item $E_{f}(S,2)=E_{g}(S,2)$ and $E_{f}(0,1)=E_{g}(0,1)$
\end{enumerate}
then  $f \equiv g$.
\end{cor}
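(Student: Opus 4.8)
The plan is to obtain this corollary as an immediate specialization of Theorem \ref{thB5}; no fresh argument is required, only a verification of the numerical conditions. First I would put $k=0$ and $m=1$ in Theorem \ref{thB5}. With $m=1$ the coprimality condition $\gcd\{m,n\}=1$ holds for every $n$, and $n\geq 8$ gives $n>4m+1=5$, so the structural hypotheses are met; moreover the admissibility constraints on the constant $c$ in \eqref{abcp1} depend only on $m$ and $n$, hence $S$ is still a legitimate choice. Since $f^{(0)}=f$, the conclusion $f^{(0)}\equiv g^{(0)}$ of Theorem \ref{thB5} is exactly $f\equiv g$.

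For part (1) I would take $q=0$. Then condition (1) of Theorem \ref{thB5} reduces to $l\geq \tfrac{3}{2}+\tfrac{3}{n-3}$; the right-hand side is decreasing in $n$ and equals $\tfrac{21}{10}$ at $n=8$, so the integer weight $l=3$ works for every $n\geq 8$, which is precisely the hypothesis $E_{f}(S,3)=E_{g}(S,3)$ taken together with $E_{f}(0,0)=E_{g}(0,0)$. Condition (2) reduces to $n>6+\tfrac{6}{n-3}$, equivalently $n^{2}-9n+12>0$, which holds for every $n\geq 8$. Theorem \ref{thB5} then gives $f\equiv g$.

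For part (2) I would take $q=1$, so that $(n-2m)q+n-2m-1=2n-5$. Condition (1) becomes $l\geq \tfrac{3}{2}+\tfrac{2}{n-3}+\tfrac{1}{2n-5}$; the right-hand side is decreasing in $n$ and already at $n=8$ equals $\tfrac{219}{110}<2$, so the integer weight $l=2$ suffices, matching $E_{f}(S,2)=E_{g}(S,2)$ and $E_{f}(0,1)=E_{g}(0,1)$. Condition (2) becomes $n>6+\tfrac{4}{n-3}+\tfrac{2}{2n-5}$, whose right-hand side stays below $7$ for all $n\geq 8$, so the inequality holds. Again Theorem \ref{thB5} yields $f\equiv g$.

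The main (and essentially only) obstacle is a bit of bookkeeping rather than any real difficulty: the sharing weight $l$ in Theorem \ref{thB5} must be rounded up to a nonnegative integer before comparing it with the stated thresholds, and one must observe that the right-hand sides of conditions (1) and (2) are monotone decreasing in $n$, so that checking the inequalities at the boundary value $n=8$ covers all admissible $n$. The genuine analytic content of the corollary lies entirely in Theorem \ref{thB5}, proved by the weighted-sharing estimates and uniqueness-polynomial arguments recalled in Section 1.
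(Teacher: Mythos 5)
Your proposal is correct and is exactly how the paper obtains this corollary: it is a direct specialization of Theorem \ref{thB5} with $k=0$, $m=1$, taking $(l,q)=(3,0)$ for part (1) and $(l,q)=(2,1)$ for part (2), and your numerical verification of conditions (1) and (2) at the boundary $n=8$ (with monotonicity in $n$) is accurate.
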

\begin{cor}\label{abe1.4} Let $n(\geq 6),~m(=1)$ and $k\geq1$ be two positive integers. Let $S=\{z :P(z)=0\}$ where the polynomial $P(z)$ defined by  \ref{abcp1}. Let $f$ and $g$ be two non-constant meromorphic function satisfying $E_{f^{(k)}}(S,3)=E_{g^{(k)}}(S,3)$ and $E_{f^{(k)}}(0,0)=E_{g^{(k)}}(0,0)$. Then $f^{(k)} \equiv g^{(k)}$.
\end{cor}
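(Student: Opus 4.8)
The plan is to derive Corollary~\ref{abe1.4} directly from Theorem~\ref{thB5} by specializing the parameters to $m=1$, $l=3$ and $q=0$. With these choices the hypotheses $E_{f^{(k)}}(S,3)=E_{g^{(k)}}(S,3)$ and $E_{f^{(k)}}(0,0)=E_{g^{(k)}}(0,0)$ of the corollary are literally those of Theorem~\ref{thB5}; the requirement $\gcd\{m,n\}=1$ is automatic because $m=1$; and the standing inequality $n>4m+1=5$ is guaranteed by $n\geq 6$. So everything reduces to checking that the two numbered conditions of Theorem~\ref{thB5} are satisfied for $m=1$, $q=0$, $k\geq 1$ and $n\geq 6$.

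For condition~(1), substituting $m=1$ and $q=0$ turns the right-hand side into $\tfrac32+\tfrac{2}{n-3}+\tfrac{1}{n-3}=\tfrac32+\tfrac{3}{n-3}$, a decreasing function of $n$ whose value at $n=6$ is $\tfrac52$. Since a weight is by definition a nonnegative integer, the inequality $l\geq\tfrac52$ is equivalent to $l\geq 3$, which is exactly the weight used in the corollary; hence condition~(1) holds for every $n\geq 6$. For condition~(2), the same substitution rewrites the right-hand side as $2+\tfrac{4}{k+1}+\tfrac{4}{(k+1)(n-3)}+\tfrac{2}{(k+1)(n-3)}=2+\tfrac{4}{k+1}+\tfrac{6}{(k+1)(n-3)}$. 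Bounding term by term, $k\geq 1$ gives $\tfrac{4}{k+1}\leq 2$, and $k\geq 1$ together with $n\geq 6$ gives $(k+1)(n-3)\geq 6$, hence $\tfrac{6}{(k+1)(n-3)}\leq 1$; therefore the right-hand side is at most $5<6\leq n$, so condition~(2) holds as well. Theorem~\ref{thB5} now applies and yields $f^{(k)}\equiv g^{(k)}$.

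Since the statement is merely a corollary of an already-established theorem, there is no genuine obstacle: the entire argument consists of the two elementary monotonicity estimates above. The only point that needs a moment's care is the integrality of the sharing weight, which is what lets the fractional lower bound $\tfrac52$ appearing in condition~(1) be replaced by the clean hypothesis $l=3$ stated in the corollary.
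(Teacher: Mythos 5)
Your proposal is correct and is exactly how the paper intends the corollary to be obtained: the paper gives no separate argument, treating it as the specialization $m=1$, $q=0$, $l=3$ of Theorem \ref{thB5}, and your two elementary checks of conditions (1) and (2) (including the integrality remark that upgrades the bound $\tfrac{5}{2}$ at $n=6$ to $l=3$) fill in precisely what is left implicit. No gaps.
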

\begin{rem} Corollary \ref{abe1.4} shows that there exists two sets $S_{1}$ (with 1 element) and $S_{2}$ (with $6$ elements) such that when derivatives of any two non-constant meromorphic functions share them with finite weight yields $f^{(k)}\equiv g^{(k)}$ thus improve {\it Theorem D} in the direction of {\it Question B}.\end{rem}

The following two examples show that specific form of choosing the set $S_{1}$ with five elements and $S_{2}=\{0\}$ Corollary \ref{abe1.4} ceases to hold.
\begin {exm}\label{abc2015}
Let $f(z) =\frac{1}{(\sqrt{\alpha \beta \gamma })^{k-1}}e^{\sqrt{\alpha \beta \gamma}\;z}$ and $g(z)=\frac{(-1)^{k}}{(\sqrt{\alpha \beta \gamma})^{k-1}}e^{-\sqrt{\alpha \beta \gamma}\;z}$ ($k\geq 1$) and \linebreak $S=\{\alpha \sqrt{\beta },\alpha \sqrt{\gamma},\beta \sqrt{\gamma },\gamma \sqrt{\beta },\sqrt(\alpha\beta\gamma)\}$, where $\alpha $, $\beta $ and $\gamma $ are three nonzero distinct complex numbers. Clearly $E_{f^{(k)}}(S)=E_{g^{(k)}}(S)$ and $E_{f^{(k)}}(0)=E_{g^{(k)}}(0)$  but $f^{(k)}\not\equiv g^{(k)}$.
\end{exm}
\begin{exm} Let $f(z)=\frac{1}{c^{k}}e^{cz}$ and $g(z)=\omega^{4}f(z)$ and $S=\{\omega^{4},\omega^{3},\omega^{2},\omega,1\}$, where $\omega$ is the non-real fifth root of unity and $c$ is a non-zero complex number. Clearly $E_{f^{(k)}}(S)=E_{g^{(k)}}(S)$ and $E_{f^{(k)}}(0)=E_{g^{(k)}}(0)$  but $f^{(k)}\not\equiv g^{(k)}$.
\end{exm}
\begin{rem} However the following question is still inevitable from the Corollary \ref{abe1.4} and Example \ref{abc2015}  that \par
 \emph{Whether there exists two suitable sets $S_{1}$ (with 1 element) and $S_{2}$ (with $5$ elements) such that when derivatives of any two non-constant meromorphic functions share them with finite weight yield $f^{(k)}\equiv g^{(k)}$ ?}\end{rem}

\section {Lemmmas}

We define\\
 $F=-\frac{(f^{(k)})^{n-2m}((f^{(k)})^{2m}-\frac{2n}{n-m}(f^{(k)})^{m}+\frac{n}{n-2m})}{c}$ , $G=-\frac{(g^{(k)})^{n-2m}((g^{(k)})^{2m}-\frac{2n}{n-m}(g^{(k)})^{m}+\frac{n}{n-2m})}{c},$\\
 where $n(\geq 1)$, $m(\geq 1)$ and $k(\geq 0)$ are non-negative integers.
Henceforth we shall denote by $H$ and $\Phi$ the following two functions $$H=(\frac{F^{''}}{F^{'}}-\frac{2F^{'}}{F-1})-(\frac{G^{''}}{G^{'}}-\frac{2G^{'}}{G-1})$$ and $$\Phi=\frac{F^{'}}{F-1}-\frac{G^{'}}{G-1}$$
Let $T(r)=\max\{T(r,f^{(k)}),T(r,g^{(k)})\}$ and $S(r)=o(T(r))$.\par
\begin{lem}\label{b0}  The polynomial $$P(z)=z^{n}-\frac{2n}{n-m}z^{n-m}+\frac{n}{n-2m}z^{n-2m}+c $$ is a critically injective polynomial having only simple zeros when $|c|\not=0,\frac{2m^2}{(n-m)(n-2m)}$.
\end{lem}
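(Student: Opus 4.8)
The plan is to read everything off from the factorization of $P'$. Differentiating \ref{abcp1} gives
\[
P'(z)=nz^{n-1}-2nz^{n-m-1}+nz^{n-2m-1}=nz^{n-2m-1}(z^{m}-1)^{2},
\]
so (recall $n>2m$ is forced for \ref{abcp1} to be a polynomial) the critical points of $P$, i.e.\ the distinct zeros of $P'$, all lie in $\{0\}\cup\{\zeta:\zeta^{m}=1\}$, with $0$ a genuine critical point exactly when $n>2m+1$. The first step is to evaluate $P$ at each of these. Plainly $P(0)=c$; and if $\zeta^{m}=1$ then $\zeta^{n-m}=\zeta^{n-2m}=\zeta^{n}$, whence
\[
P(\zeta)=\zeta^{n}\Bigl(1-\frac{2n}{n-m}+\frac{n}{n-2m}\Bigr)+c .
\]
A one-line computation over the common denominator $(n-m)(n-2m)$ collapses the bracket to $A:=\dfrac{2m^{2}}{(n-m)(n-2m)}$, so $P(\zeta)=A\zeta^{n}+c$ for every $m$-th root of unity $\zeta$.

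Granting these values, critical injectivity is almost immediate. For $\zeta\neq 0$ with $\zeta^{m}=1$ one has $P(\zeta)-P(0)=A\zeta^{n}\neq 0$ because $A\neq 0$ (as $m\geq 1$) and $\zeta\neq 0$. If $\zeta_{1}\neq\zeta_{2}$ are two $m$-th roots of unity with $P(\zeta_{1})=P(\zeta_{2})$, then $\zeta_{1}^{n}=\zeta_{2}^{n}$, i.e.\ $(\zeta_{1}/\zeta_{2})^{n}=1$; but $\zeta_{1}/\zeta_{2}$ is a nontrivial $m$-th root of unity and, since $\gcd(m,n)=1$, the map $\omega\mapsto\omega^{n}$ is a bijection of the group $\mu_{m}$ of $m$-th roots of unity, so $\zeta_{1}^{n}\neq\zeta_{2}^{n}$ --- a contradiction. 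Hence $P$ is injective on all of $\{0\}\cup\mu_{m}$, and in particular on its set of critical points.

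For simplicity of the zeros I would argue by contradiction: a multiple zero $z_{0}$ of $P$ satisfies $P(z_{0})=P'(z_{0})=0$, hence $z_{0}\in\{0\}\cup\mu_{m}$. But $P(0)=c\neq 0$ since $|c|\neq 0$, and for $\zeta^{m}=1$
\[
|P(\zeta)|=|A\zeta^{n}+c|\geq\bigl|\,|c|-A\,\bigr|>0
\]
by the reverse triangle inequality together with $|c|\neq\frac{2m^{2}}{(n-m)(n-2m)}=A$. So $P$ and $P'$ have no common zero, i.e.\ $\gcd(P,P')=1$, and every zero of $P$ is simple; note that only $|c|\neq 0,A$ is used. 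There is no real obstacle here: the two computational points to get right are the factorization $P'=nz^{n-2m-1}(z^{m}-1)^{2}$ and the cancellation turning $P(\zeta)$ into $A\zeta^{n}+c$, and one should not forget that the coprimality $\gcd(m,n)=1$ (in force throughout the paper) is genuinely needed for injectivity on $\mu_{m}$ --- for instance when $m=2,\ n=6$ one computes $P(1)=P(-1)$, so $P$ is then not critically injective.
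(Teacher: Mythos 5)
Your proof is correct and follows essentially the same route as the paper's: factor $P'(z)=nz^{n-2m-1}(z^{m}-1)^{2}$, compare the values $P(0)=c$ and $P(\zeta)=\frac{2m^{2}}{(n-m)(n-2m)}\zeta^{n}+c$ at the critical points, use $\gcd(m,n)=1$ to separate the values on the $m$-th roots of unity (your bijection $\omega\mapsto\omega^{n}$ on $\mu_m$ is just the paper's Bézout step $ms+nt=1$), and rule out common zeros of $P$ and $P'$ via $|c|\neq 0,\frac{2m^{2}}{(n-m)(n-2m)}$. The explicit collapse of the bracket to $\frac{2m^{2}}{(n-m)(n-2m)}$ and the remark that coprimality is genuinely needed are nice touches but do not change the argument.
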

\begin{proof} Since $$P'(z)=nz^{n-2m-1}(z^{m}-1)^{2}$$
$P$ is critically injective, because
\begin{enumerate}
\item $P(0)=P(\alpha)$ where $\alpha^{m}=1$ gives $\alpha=0$ which is a contradiction, and
\item $P(\beta)=P(\gamma)$ where $\beta^{m}=1,\gamma^{m}=1$, gives $\beta^{n}=\gamma^{n}$.\\
Now as $\gcd\{m,n\}=1$, so there exist integers $s,t$ such that $ms+nt=1$.\\
Thus $\beta=\beta^{ms+nt}=\gamma^{ms+nt}=\gamma$.
\end{enumerate}
For the second part if $P(\alpha)=P'(\alpha)=0$ then either $\alpha=0$ or $\alpha^{m}=1$.\\
If $\alpha=0$ then $P(\alpha)=c$, which is not zero by assumption.\\
If  $\alpha^{m}=1$ then $P(\alpha)=\alpha^{n}(1-\frac{2n}{n-m}+\frac{n}{n-2m})+c=0$\\
As $|\alpha|=1$ so $|c|=\frac{2m^{2}}{(n-m)(n-2m)}$ which is not possible by assumption.
\end{proof}

\begin{lem}\label{b1}(\cite{6})  Suppose that $P(z)$ is a monic polynomial without multiple zero whose derivatives has mutually distinct $t$ zeros given by $d_{1}, d_{2}, \ldots, d_{t}$ with multiplicities $q_{1}, q_{2}, \ldots, q_{t}$ respectively. Also suppose that $P(z)$ is critically injective.  Then $P(z)$ will be a uniqueness polynomial if and only if $$\sum \limits_{1\leq l<m\leq k}q_{_{l}}q_{m}>\sum \limits_{l=1}^{t} q_{_{l}}.$$
In particular the above inequality is always satisfied whenever $t\geq 4$. When $t=3$ and $\max \{q_{1},q_{2},q_{3}\}\geq 2$ or when $t=2$, $\min \{q_{1},q_{2}\}\geq 2$ and $q_{1}+q_{2}\geq 5$ then also the above inequality holds.\
\end{lem}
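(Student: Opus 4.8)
The plan is to reproduce the line of argument of \cite{6}: translate the uniqueness‑polynomial property into a statement about the plane algebraic curve cut out by $Q(x,y):=\dfrac{P(x)-P(y)}{x-y}$, and then extract the displayed inequality from a genus computation. First one records the structure of $P$. Since $P'(z)=n\prod_{i=1}^{t}(z-d_{i})^{q_{i}}$ we have $\deg P=n=1+\sum_{i=1}^{t}q_{i}$; put $a_{i}:=P(d_{i})$, so that the critical injection hypothesis says exactly that the $a_{i}$ are pairwise distinct. Because $P$ has only simple zeros and because any multiple zero of $P-a_{i}$ would have to be a zero of $P'$, hence some $d_{j}$ with $P(d_{j})=a_{j}\neq a_{i}$ (impossible), we obtain the factorization $P(z)-a_{i}=(z-d_{i})^{q_{i}+1}R_{i}(z)$ in which $R_{i}$ has only simple zeros and $R_{i}(d_{j})\neq0$ for every $j$. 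These observations are what make the subsequent bookkeeping manageable.

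Next one reduces the problem to the curve. Note $Q(x,x)=P'(x)\not\equiv0$, so the diagonal is not a component of $\{Q=0\}$, and no line $\{x=c\}$ or $\{y=c\}$ is a component either (on such a line $Q$ restricts to a polynomial of degree $n-1$ in the other variable). Hence $P$ is a uniqueness polynomial for meromorphic functions if and only if there are no non‑constant meromorphic $f,g$ with $Q(f,g)\equiv0$ — equivalently, with $P(f)\equiv P(g)$ and $f\not\equiv g$. If such $f,g$ exist, $(f,g)$ maps $\mathbb{C}$ into a single irreducible component $\Gamma_{0}$ of the closure $\Gamma\subset\mathbb{P}^{1}\times\mathbb{P}^{1}$ of $\{Q=0\}$; lifting through the normalization to the smooth model $\widetilde{\Gamma_{0}}$ we obtain a non‑constant holomorphic map from $\mathbb{C}$, which forces the genus of $\widetilde{\Gamma_{0}}$ to be $0$ or $1$. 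Conversely, a component of genus $0$ (resp.\ $1$) can be parametrized by rational (resp.\ elliptic) functions, and since both projections are non‑constant on it and it is not the diagonal, this produces non‑constant meromorphic $f,g$ with $f\not\equiv g$ and $Q(f,g)\equiv0$. Therefore $P$ is a uniqueness polynomial \emph{if and only if} every irreducible component of $\Gamma$ has geometric genus $\ge2$.

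It remains to compute genera, and this is the step I expect to be the genuine obstacle. One applies Riemann--Hurwitz to the projection $\pi\colon\Gamma\to\mathbb{P}^{1}$, $(x,y)\mapsto x$, which is generically $(n-1)$‑to‑$1$; its ramification is concentrated over the fibres $P^{-1}(a_{i})$, and, using critical injection together with the factorization $P-a_{i}=(z-d_{i})^{q_{i}+1}R_{i}$, the ramification contributed over $a_{i}$ (where the sheet $y=d_{i}$ meets the $(q_{i}+1)$‑fold value, possibly colliding with the sheet $y=x$) is controlled by $q_{i}$ alone. Carrying out this accounting expresses the genus of each component of $\Gamma$ through the $q_{i}$; critical injection pins down how $\Gamma$ decomposes, and one finds that some component has genus $\le1$ exactly when the displayed inequality fails, i.e.\ that ``every component has genus $\ge2$'' is equivalent to
\[
\sum_{1\le l<m\le t}q_{l}q_{m}>\sum_{l=1}^{t}q_{l}.
\]
The delicate points are the precise ramification count in each $a_{i}$‑fibre and the passage to the normalization; both must be handled with the factorization of the first step in hand.

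Finally, for the ``in particular'' clauses set $s:=\sum_{l=1}^{t}q_{l}=n-1$ and use the identity
\[
2\Big(\sum_{1\le l<m\le t}q_{l}q_{m}-\sum_{l=1}^{t}q_{l}\Big)=\sum_{l=1}^{t}q_{l}\,(s-q_{l}-2).
\]
If $t\ge4$ then $s-q_{l}=\sum_{m\neq l}q_{m}\ge t-1\ge3$, so every summand is $\ge q_{l}\ge1$ and the inequality holds. If $t=3$ and $\max_{i}q_{i}\ge2$, the two summands attached to the two smaller $q_{i}$'s are each $\ge1$, so again the sum is positive. If $t=2$ with $\min_{i}q_{i}\ge2$ and $q_{1}+q_{2}\ge5$, the inequality reduces to $q_{1}q_{2}>q_{1}+q_{2}$, i.e.\ $\tfrac1{q_{1}}+\tfrac1{q_{2}}<1$; since $q_{1}=q_{2}=2$ is excluded by $q_{1}+q_{2}\ge5$, one of $q_{1},q_{2}$ is $\ge3$ and the bound follows.
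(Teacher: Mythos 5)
This lemma is not proved in the paper at all: it is quoted verbatim from Fujimoto \cite{6}, so there is no internal argument to compare with. Judged on its own terms, your proposal contains correct and useful material — the factorization $P(z)-a_{i}=(z-d_{i})^{q_{i}+1}R_{i}(z)$ with $R_i$ having only simple zeros, the reduction of the uniqueness-polynomial property to the statement that every irreducible component of the curve $Q(x,y)=\frac{P(x)-P(y)}{x-y}=0$ has geometric genus at least $2$ (via Picard in one direction and rational/elliptic parametrizations in the other), and the elementary verification of the three ``in particular'' cases via the identity $2\bigl(\sum_{l<m}q_lq_m-\sum_l q_l\bigr)=\sum_l q_l(s-q_l-2)$ are all sound.

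However, the proof has a genuine gap exactly where the lemma has its content: the equivalence of the genus condition with the inequality $\sum_{1\le l<m\le t}q_lq_m>\sum_{l=1}^{t}q_l$ is only announced (``one finds that \dots''), and you yourself flag the Riemann--Hurwitz accounting as the expected obstacle. Neither direction is actually carried out. For the sufficiency direction, the curve $Q=0$ may be reducible, and Riemann--Hurwitz must be applied to each component separately; one must control how the sheets over each critical value $a_i$, and the ramification they carry, distribute among the components (this is where critical injectivity and the factorization really enter), and none of this bookkeeping is done — indeed Fujimoto's own published argument avoids a full genus computation and instead runs a careful Nevanlinna-theoretic counting with the values $d_1,\dots,d_t$, precisely because the component structure is delicate. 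For the necessity direction, one must exhibit, whenever the inequality fails (the finitely many configurations such as $t=1$, $t=2$ with $\min q_i=1$ or $q_1+q_2\le 4$, $t=3$ with $q_1=q_2=q_3=1$), a component of genus $0$ or $1$, i.e.\ concrete non-constant meromorphic $f\not\equiv g$ with $P(f)\equiv P(g)$; no such construction appears. As it stands the proposal is a correct reduction plus the easy arithmetic tail, but the central ``if and only if'' remains unproved.
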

\begin{lem}\label{bd2}  $F$ and $G$ are defined as earlier. Then $F\equiv G$ gives $f^{(k)}\equiv g^{(k)}$ when $k\geq0$ and $n\geq 2m+4$.
\end{lem}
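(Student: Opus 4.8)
The plan is to recognise $F\equiv G$ as the polynomial identity $P(f^{(k)})\equiv P(g^{(k)})$ and then feed it into the uniqueness--polynomial machinery already prepared in Lemmas \ref{b0} and \ref{b1}; after that, the proof is essentially bookkeeping.

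First I would rewrite the auxiliary functions in terms of $P$. Since
$$P(z)-c=z^{n}-\frac{2n}{n-m}z^{n-m}+\frac{n}{n-2m}z^{n-2m},$$
the very definitions of $F$ and $G$ give $F=1-\frac{P(f^{(k)})}{c}$ and $G=1-\frac{P(g^{(k)})}{c}$. Hence $F\equiv G$ holds exactly when $P(f^{(k)})\equiv P(g^{(k)})$, and the whole assertion reduces to showing that $P$ is a uniqueness polynomial for meromorphic functions under the single hypothesis $n\ge 2m+4$ (here $f^{(k)}$ and $g^{(k)}$ serve as the two non-constant meromorphic functions to which the definition of a uniqueness polynomial is applied).

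Next I would extract the structural data of $P$. By Lemma \ref{b0}, $P$ is monic, critically injective and has only simple zeros, so the criterion of Lemma \ref{b1} is available once the zero pattern of $P'$ is known. From $P'(z)=nz^{n-2m-1}(z^{m}-1)^{2}$ and $n\ge 2m+4$ one reads off that $P'$ has exactly $t=m+1$ distinct zeros: the origin, with multiplicity $q_{0}=n-2m-1\ (\ge 3)$, and the $m$ distinct $m$-th roots of unity, each with multiplicity $2$.

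Finally I would check the numerical condition $\sum_{l<j}q_{l}q_{j}>\sum_{l}q_{l}$ of Lemma \ref{b1} by a short case split on $m$: if $m\ge 3$ then $t=m+1\ge 4$ and the inequality holds automatically; if $m=2$ then $t=3$ and $\max\{q_{0},2,2\}=q_{0}\ge 3\ge 2$; if $m=1$ then $t=2$, $\min\{q_{0},2\}=2$ and $q_{0}+2=n-1\ge 5$ because $n\ge 2m+4=6$. In every case $P$ is a uniqueness polynomial, so $P(f^{(k)})\equiv P(g^{(k)})$ forces $f^{(k)}\equiv g^{(k)}$. The one point needing a little care is organising the case analysis so that the single bound $n\ge 2m+4$ covers all regimes: it is the case $m=1$, where the ``$t=2$'' inequality $q_{1}+q_{2}\ge 5$ is sharp, that dictates the additive constant $4$, the cases $m\ge 2$ leaving ample slack.
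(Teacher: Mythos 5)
Your proposal is correct and follows essentially the same route as the paper: rewrite $F\equiv G$ as $P(f^{(k)})\equiv P(g^{(k)})$, invoke Lemma \ref{b0} for critical injectivity and simple zeros, read off the zeros of $P'$ (giving $t=m+1$), and apply Fujimoto's criterion in Lemma \ref{b1}. The only difference is that you spell out the case check of the inequality for $m=1,2,\ge 3$, which the paper leaves implicit in the ``in particular'' clause of Lemma \ref{b1}.
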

\begin{proof} $F\equiv G$ implies $P(f^{(k)})=P(g^{(k)})$.\\
Since $P$ is critically injective polynomial having no multiple zeros and $$P'(z)=nz^{n-2m-1}(z^{m}-1)^{2},~t=m+1.$$ So
 when $n\geq 2m+4$ we have by the Lemma \ref{b1} that $f^{(k)}\equiv g^{(k)}$.
\end{proof}
\begin{lem}\label{bb3} $F$ and $G$ are defined as earlier, then $FG\not\equiv 1$ for $k\geq0$ and $n\geq5$.
\end{lem}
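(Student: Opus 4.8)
The plan is to argue by contradiction. Assume $FG\equiv1$; as throughout, $f^{(k)}$ and $g^{(k)}$ are taken to be nonconstant, so that $T(r)\to\infty$. Put $Q(z)=z^{2m}-\frac{2n}{n-m}z^{m}+\frac{n}{n-2m}$, so that $F=-\frac1c(f^{(k)})^{n-2m}Q(f^{(k)})$ and $G=-\frac1c(g^{(k)})^{n-2m}Q(g^{(k)})$ are polynomials of degree $n$ in $f^{(k)}$ and $g^{(k)}$ respectively. First I would note two elementary facts. (a) The polynomial $Q$ has $2m$ distinct roots $\gamma_{1},\dots,\gamma_{2m}$, all simple and all nonzero: indeed $Q(0)=\frac{n}{n-2m}\neq0$, and a common zero of $Q$ and $Q'=2mz^{m-1}\big(z^{m}-\frac{n}{n-m}\big)$ would have to satisfy $z^{m}=\frac{n}{n-m}$, at which point $Q$ takes the value $\frac{n}{n-2m}-\frac{n^{2}}{(n-m)^{2}}$, which is nonzero since $(n-m)^{2}\neq n(n-2m)$ (this is in the spirit of Lemma \ref{b0}). (b) Since $F$ is a degree-$n$ polynomial in $f^{(k)}$ we have $T(r,F)=n\,T(r,f^{(k)})+S(r)$ and similarly for $G$; as $FG\equiv1$ gives $T(r,F)=T(r,G)+O(1)$, it follows that $T(r,f^{(k)})=T(r,g^{(k)})+S(r)=T(r)+S(r)$.

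The core of the argument consists of multiplicity restrictions drawn from the identity
$$(f^{(k)})^{n-2m}Q(f^{(k)})\cdot(g^{(k)})^{n-2m}Q(g^{(k)})\equiv c^{2}.$$
At a $\gamma_{i}$-point of $f^{(k)}$ of multiplicity $u$ the left factor vanishes to order exactly $u$ (because $\gamma_{i}\neq0$ is a simple zero of $Q$), so the right factor — a degree-$n$ polynomial in $g^{(k)}$ — has a pole of order exactly $u$ there; hence $n\mid u$, i.e. every $\gamma_{i}$-point of $f^{(k)}$ has multiplicity $\geq n$, giving $\sum_{i=1}^{2m}\overline N(r,\gamma_{i};f^{(k)})\leq\frac{2m}{n}T(r)+S(r)$, and the same for $g^{(k)}$. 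At a zero of $f^{(k)}$ of multiplicity $s$ the left factor vanishes to order $(n-2m)s$ (as $Q(0)\neq0$), so $n\mid(n-2m)s$; since $\gcd(m,n)=1$ forces $\gcd(n,n-2m)=\gcd(n,2m)\in\{1,2\}$, we get $s\geq n/2\geq3$, hence $\overline N(r,0;f^{(k)})\leq\frac13 N(r,0;f^{(k)})\leq\frac13 T(r)+S(r)$, and the same for $g^{(k)}$. Finally, $FG\equiv1$ matches the poles of $f^{(k)}$ (= poles of $F$) with the zeros of $G$, i.e. with the zeros and $\gamma_{i}$-points of $g^{(k)}$, so
$$\overline N(r,\infty;f^{(k)})=\overline N(r,0;g^{(k)})+\sum_{i=1}^{2m}\overline N(r,\gamma_{i};g^{(k)})\leq\Big(\tfrac13+\tfrac{2m}{n}\Big)T(r)+S(r),$$
and likewise $\overline N(r,\infty;g^{(k)})\leq(\tfrac13+\tfrac{2m}{n})T(r)+S(r)$.

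Then I would invoke the second fundamental theorem for $f^{(k)}$ applied to the $2m+2$ distinct values $0,\gamma_{1},\dots,\gamma_{2m},\infty$:
$$2m\,T(r)\leq\overline N(r,0;f^{(k)})+\sum_{i=1}^{2m}\overline N(r,\gamma_{i};f^{(k)})+\overline N(r,\infty;f^{(k)})+S(r)\leq\Big(\tfrac23+\tfrac{4m}{n}\Big)T(r)+S(r).$$
Dividing by $T(r)$ and letting $r\to\infty$ gives $2m\leq\tfrac23+\tfrac{4m}{n}$, that is $m\leq\frac{n}{3(n-2)}$; but for every $n\geq5$ the right-hand side is less than $1$, contradicting $m\geq1$. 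Hence $FG\not\equiv1$.

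The step I expect to be the main obstacle is the arithmetic passage from $n\mid(n-2m)s$ to $s\geq3$: it relies on $\gcd(m,n)=1$ (so that $\gcd(n,2m)\leq2$), and this improvement on the zeros of $f^{(k)}$ and $g^{(k)}$ is precisely what makes the final estimate close in the awkward case $m=1$ — using only the trivial bound $\overline N(r,0;\cdot)\leq T(r)+S(r)$ one would obtain merely $m\leq\frac{n}{n-2}$, which settles only $m\geq2$. A secondary point needing care is the verification that $Q$ has $2m$ distinct nonzero roots, which is what supplies the full coefficient $2m$ on the left of the second main theorem. Everything else is routine bookkeeping with Nevanlinna counting functions.
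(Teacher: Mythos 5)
Your proof is correct and follows essentially the same route as the paper: assuming $FG\equiv1$, you extract multiplicity bounds for the zeros, the $\gamma_i$-points and the poles of $f^{(k)}$ and $g^{(k)}$ from the identity $(f^{(k)})^{n-2m}Q(f^{(k)})\,(g^{(k)})^{n-2m}Q(g^{(k)})\equiv c^{2}$ and then apply the Second Fundamental Theorem with the $2m+2$ values $0,\gamma_{1},\dots,\gamma_{2m},\infty$ to get a contradiction for $n\geq5$. The only (harmless) difference is in the zero-point estimate: the paper obtains $t\geq \frac{n}{2m}$ directly from $(n-2m)t=n\cdot(\text{pole order})$ without needing $\gcd(m,n)=1$, whereas your divisibility argument uses that standing hypothesis of the theorems (and your chain ``$s\geq n/2\geq 3$'' should read $s\geq\lceil n/2\rceil\geq3$ for odd $n=5$).
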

\begin{proof} On contrary, suppose $FG\equiv 1$\\
Then by  Mokhon'ko's Lemma(\cite{11.1}), $T(r,f^{(k)})=T(r,g^{(k)})+O(1)$.\\
Then \bea\label{r0}(f^{(k)})^{n-2m}\prod_{i=1}^{2m}((f^{(k)})-\gamma_{i})(g^{(k)})^{n-2m}\prod_{i=1}^{2m}((g^{(k)})-\gamma_{i})=c^{2},\eea
where $\gamma_{i}$ (i=1,2,...,2m) are the roots of the equation $z^{2m}-\frac{2n}{n-m}z^{m}+\frac{n}{n-2m}=0$.\par
Let $z_{0}$ be a $\gamma_{i}$ point of $f^{(k)}$ of order $p$. Then $z_{0}$ is a pole of $g$ of order $q$ such that $p=n(1+k)q\geq n$.
So $$\ol{N}(r,\gamma_{i};f^{(k)})\leq\frac{1}{n}N(r,\gamma_{i};f^{(k)}).$$
Again let $z_{0}$ be a zero of $f^{(k)}$ of order $t$. Then $z_{0}$ is a pole of $g$ of order $s$ such that $(n-2m)t=ns(1+k)$.\\
Thus $t>s(1+k)$ and $2ms(1+k)=(n-2m)(t-s(1+k))\geq(n-2m)$. Consequently $(n-2m)t=ns(1+k)$ gives $t\geq\frac{n}{2m}.$
So $$\ol{N}(r,0;f^{(k)})\leq\frac{2m}{n}N(r,0;f^{(k)}).$$
again \beas \ol{N}(r,\infty;f^{(k)}) &\leq& \ol{N}(r,0;g^{(k)})+\sum\limits_{i=0}^{2m}\ol{N}(r,\gamma_{i};g^{(k)})\\
 &\leq& \frac{2m}{n}N(r,0;g^{(k)})+\frac{1}{n}\sum\limits_{i=0}^{2m}N(r,\gamma_{i};g^{(k)})\\
 &\leq& \frac{4m}{n}T(r,g^{(k)})+O(1).\eeas
 Now by using the Second Fundamental Theorem we get
 \bea\label{r1} && 2mT(r,f^{(k)})\\
\nonumber  &\leq& \ol{N}(r,\infty;f^{(k)})+\ol{N}(r,0;f^{(k)})+\sum\limits_{i=0}^{2m}\ol{N}(r,\gamma_{i};f^{(k)})+S(r,f^{(k)})\\
\nonumber  &\leq& \frac{4m}{n}T(r,f^{(k)})+\frac{2m}{n}T(r,f^{(k)})+\frac{2m}{n}T(r,f^{(k)})+S(r,f^{(k)}),\eea
  which is a contradiction as $n\geq5$.
\end{proof}
\begin{lem}\label{b4}(\cite{3.1})
If $F$ and $G$ share $(1,l)$ where $0\leq l<\infty$ then\\
$\ol{N}(r,1;F)+\ol{N}(r,1;G)-N_{E}^{1}(r,1,F)+(l-\frac{1}{2})\ol{N}_{*}(r,1;F,G)\leq\frac{1}{2}(N(r,1;F)+N(r,1;G)).$
\end{lem}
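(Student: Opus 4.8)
The plan is to prove the estimate by a multiplicity bookkeeping argument for the common $1$-points of $F$ and $G$, the only substantive input being a lower bound on multiplicities forced by the weight $l$. First I would record that sharing $(1,l)$ with $l\ge 0$ entails sharing the value $1$ with weight $0$, so that $F$ and $G$ have exactly the same $1$-points. Writing $p,q$ for the multiplicities of such a point with respect to $F$ and $G$, every common $1$-point lies in precisely one of the four classes counted by $N_{E}^{1)}(r,1;F)$ (where $p=q=1$), $\ol N_{E}^{(2}(r,1;F)$ (where $p=q\ge 2$), $\ol N_{L}(r,1;F)$ (where $p>q\ge 1$) and $\ol N_{L}(r,1;G)$ (where $q>p\ge 1$). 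This yields the decomposition
\[ \ol N(r,1;F)=\ol N(r,1;G)=N_{E}^{1)}(r,1;F)+\ol N_{E}^{(2}(r,1;F)+\ol N_{L}(r,1;F)+\ol N_{L}(r,1;G), \]
together with $\ol N_{*}(r,1;F,G)=\ol N_{L}(r,1;F)+\ol N_{L}(r,1;G)$, which is the bookkeeping skeleton of the argument.

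The key step is to quantify the effect of the weight. From $E_{l}(1;F)=E_{l}(1;G)$ one has $\min(p,l+1)=\min(q,l+1)$ at each common $1$-point; at a point contributing to $\ol N_{*}(r,1;F,G)$, where $p\neq q$, this forces the smaller of $p,q$ to be $\ge l+1$ whenever $l\ge 1$ (and for $l=0$ one still has trivially $p\ge 2$, $q\ge 1$ there). In every case the two multiplicities at an $\ol N_{*}$-point satisfy $p+q\ge 2l+3$. This sharp multiplicity inequality is the part I expect to need the most care, since it is exactly what makes the coefficient $l-\tfrac12$ absorbable; it is attained with equality at an $\ol N_{*}$-point of multiplicities $(l+2,l+1)$, and at an $L$-point of multiplicities $(2,1)$ when $l=0$.

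Finally I would combine these facts. Substituting the decomposition into the left-hand side of the asserted inequality and simplifying gives
\[ \ol N(r,1;F)+\ol N(r,1;G)-N_{E}^{1)}(r,1;F)+\Bigl(l-\tfrac12\Bigr)\ol N_{*}(r,1;F,G)=N_{E}^{1)}(r,1;F)+2\ol N_{E}^{(2}(r,1;F)+\Bigl(l+\tfrac32\Bigr)\ol N_{*}(r,1;F,G). \]
On the other hand, $N(r,1;F)+N(r,1;G)$ contributes $2$ at each $N_{E}^{1)}$-point, $2p\ge 4$ at each $\ol N_{E}^{(2}$-point, and $p+q\ge 2l+3$ at each $\ol N_{*}$-point, whence
\[ \tfrac12\bigl(N(r,1;F)+N(r,1;G)\bigr)\ge N_{E}^{1)}(r,1;F)+2\ol N_{E}^{(2}(r,1;F)+\Bigl(l+\tfrac32\Bigr)\ol N_{*}(r,1;F,G). \]
Comparing the last two displays yields the lemma. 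The remaining verifications (that the passage from the decomposition to the first display is an identity, and that the term-by-term comparison in the second display is valid for all admissible multiplicity configurations) are routine.
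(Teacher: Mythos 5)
Your proof is correct. Note that the paper itself offers no argument for this lemma --- it is quoted from the reference \cite{3.1} --- and your multiplicity bookkeeping supplies a valid self-contained proof: the decomposition of the common $1$-points into the classes $N_{E}^{1)}$, $\ol N_{E}^{(2}$, $\ol N_{L}(r,1;F)$, $\ol N_{L}(r,1;G)$, the deduction from $E_{l}(1;F)=E_{l}(1;G)$ that an $\ol N_{*}$-point has multiplicities summing to at least $2l+3$ (including the trivial case $l=0$), and the resulting pointwise comparison with $\tfrac12\bigl(N(r,1;F)+N(r,1;G)\bigr)$ are all sound, and this is essentially the standard argument by which such weighted-sharing estimates are established in the cited source.
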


\begin{lem}\label{b7} Let $F$, $G$, $\Phi$ be defined previously and $F\not\equiv G$. If $f^{(k)}$ and $g^{(k)}$ share $(0,q)$ where $0\leq q<\infty$ and $F$, $G$ share $(1,l)$ then
\beas & &\{(n-2m)q+n-2m-1\}\;\ol N(r,0;f^{(k)}\mid\geq q+1)\\
&\leq& \ol{N}(r,\infty;f^{(k)})+\ol{N}(r,\infty;g^{(k)})+\ol{N}_{*}(r,1;F,G)+S(r).\eeas
Similar expressions hold for $g$ also. \end{lem}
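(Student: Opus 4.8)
The statement to prove is Lemma~\ref{b7}, which gives a lower bound on $\ol N(r,0;f^{(k)}\mid \geq q+1)$, the reduced counting function of zeros of $f^{(k)}$ of multiplicity strictly larger than the weight $q$. The natural strategy is to track how a zero of $f^{(k)}$ of high multiplicity propagates through the polynomial relation. The plan is to start from the hypothesis $F\not\equiv G$ together with the shared-set data $E_{f^{(k)}}(S,l)=E_{g^{(k)}}(S,l)$, which translates precisely into $F$ and $G$ sharing $(1,l)$, since $S=\{z:P(z)=0\}$ and $F=-P(f^{(k)})/c$, so that the $1$-points of $F$ are exactly the $S$-points of $f^{(k)}$.

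\medskip
\textbf{Key steps.} First I would analyze a zero $z_0$ of $f^{(k)}$ of multiplicity $t\geq q+1$. Because $P(z)=z^n-\tfrac{2n}{n-m}z^{n-m}+\tfrac{n}{n-2m}z^{n-2m}+c$ has $z=0$ as a zero of $P'$ of multiplicity $n-2m-1$ (recall $P'(z)=nz^{n-2m-1}(z^m-1)^2$), the point $z_0$ is a zero of $F-(1-\tfrac{1}{c}(P(0)))$-type expression; more precisely $z_0$ is a zero of $F'=-P'(f^{(k)})(f^{(k)})'/c$ of multiplicity $(n-2m-1)t+(t-1)=(n-2m)t-1$. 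Since $f^{(k)}$ and $g^{(k)}$ share $(0,q)$ and $t>q$, the point $z_0$ is also a zero of $g^{(k)}$, say of multiplicity $s$, so it contributes to $\ol N(r,0;g^{(k)}\mid\geq q+1)$ — but the key asymmetry is that the value $P(0)=c$ that $F$ takes at $z_0$ may differ from the branching structure on the $G$ side, so $z_0$ shows up in $\ol N_*(r,1;F,G)$ unless the multiplicities match in a constrained way. I would then set up the standard Nevanlinna inequality: estimate $N(r,0;F')$ (equivalently, count zeros of $F'$ that are not $1$-points nor poles) and compare with the shared structure, using that the contribution of such a high-order zero $z_0$ to the "defect" is at least of order $(n-2m)t-1\geq (n-2m)(q+1)-1=(n-2m)q+n-2m-1$ per point, which is exactly the coefficient appearing on the left side. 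The poles of $f^{(k)}$ and of $g^{(k)}$, together with $\ol N_*(r,1;F,G)$, are what absorb these zeros on the right-hand side.

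\medskip
\textbf{Executing the count.} Concretely I expect the argument to run through the function $\Phi=\tfrac{F'}{F-1}-\tfrac{G'}{G-1}$ (or directly through a logarithmic-derivative lemma). Since $F\not\equiv G$, one shows $\Phi\not\equiv 0$ is not forced, but the relevant object is $N(r,0;F'\mid F\neq 1)$: a simple pole of $F$ contributes, and the high-order zeros of $f^{(k)}$ contribute with weight $(n-2m)t-1$. Every zero of $f^{(k)}$ counted in $\ol N(r,0;f^{(k)}\mid\geq q+1)$ is either a point where the corresponding multiplicities of $F$ and $G$ at their $1$-point agree — in which case it cancels against a matching zero of $G'$ and the analysis of $H$ or $\Phi$ forces it into $\ol N_*$ or into a pole term — or the multiplicities disagree and it lands in $\ol N_*(r,1;F,G)$ directly. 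Summing and applying the first fundamental theorem / Milloux-type estimates, the $(n-2m)q+n-2m-1$ factor emerges as the minimal branching weight, and the leftover terms are precisely $\ol N(r,\infty;f^{(k)})+\ol N(r,\infty;g^{(k)})+\ol N_*(r,1;F,G)+S(r)$.

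\medskip
\textbf{Main obstacle.} The delicate point will be bookkeeping the exact multiplicities: a zero of $f^{(k)}$ of order $t$ is a $1$-point of $F$ of order $(n-2m)t$ (not $t$), and I must be careful that when $z_0$ is \emph{also} a zero of $g^{(k)}$ the orders on the two sides, namely $(n-2m)t$ and $(n-2m)s$, are generically different, so $z_0\in\ol N_*(r,1;F,G)$ unless $t=s$; and even when $t=s$, such a common high-order $1$-point is captured because it forces $H\equiv 0$-type behavior or contributes via $\Phi$. Getting the inequality to be exactly $\{(n-2m)q+n-2m-1\}$ rather than something slightly weaker requires using the minimal value $t=q+1$ sharply and not over-counting the "$-1$" coming from the zero of $(f^{(k)})'$. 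I would handle this by splitting $\ol N(r,0;f^{(k)}\mid\geq q+1)$ according to whether $z_0$ is a common zero with matching or non-matching multiplicity, and in the matching case pushing the contribution into the pole terms via the structure of $\Phi$ and the hypothesis $F\not\equiv G$; the non-matching case goes straight into $\ol N_*$. The symmetric statement for $g$ follows by interchanging the roles of $f$ and $g$.
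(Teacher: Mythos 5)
Your overall instinct---work with $\Phi$ and extract the factor $(n-2m)q+n-2m-1$ from the high-order vanishing produced by a zero of $f^{(k)}$ of multiplicity $\geq q+1$---points in the right direction, but your bookkeeping contains a genuine error that derails the execution. A zero of $f^{(k)}$ of order $t$ is a \emph{zero} of $F$ of order $(n-2m)t$, not a $1$-point of $F$: since $P(0)=c\neq 0$, the value $0$ does not belong to $S$, and the $1$-points of $F$ are exactly the points where $f^{(k)}\in S$ (because $F-1=-P(f^{(k)})/c$). Consequently a zero of $f^{(k)}$ can never appear in $\ol N_{*}(r,1;F,G)$, which by definition counts only common $1$-points of $F$ and $G$ with differing multiplicities; your proposed dichotomy (matching multiplicities cancel, non-matching ones land in $\ol N_{*}$) is therefore vacuous, and the term $\ol N_{*}(r,1;F,G)$ on the right-hand side must enter by an entirely different mechanism than the one you describe.

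The actual argument, which you gesture at but never carry out, runs as follows. If $\Phi\not\equiv 0$, then a zero $z_{0}$ of $f^{(k)}$ of order $t\geq q+1$ is, because $f^{(k)}$ and $g^{(k)}$ share $(0,q)$, also a zero of $g^{(k)}$ of order $\geq q+1$; hence $F$ and $G$ both vanish at $z_{0}$ to order $\geq (q+1)(n-2m)$ while $F-1$ and $G-1$ do not vanish there, so $F'/(F-1)$, $G'/(G-1)$, and therefore $\Phi$, vanish at $z_{0}$ to order at least $(q+1)(n-2m)-1=(n-2m)q+n-2m-1$. This gives $\{(n-2m)q+n-2m-1\}\,\ol N(r,0;f^{(k)}\mid\geq q+1)\leq N(r,0;\Phi)\leq T(r,\Phi)+O(1)\leq N(r,\infty;\Phi)+S(r)$, the last step using the lemma on the logarithmic derivative to control $m(r,\Phi)$; and the poles of $\Phi$ are simple and occur only at poles of $f^{(k)}$ or $g^{(k)}$ and at $1$-points of $F,G$ whose multiplicities differ---this, and only this, is where $\ol N_{*}(r,1;F,G)$ comes from. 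Moreover you never treat the degenerate case $\Phi\equiv 0$, which cannot be skipped: there integration yields $F-1=A(G-1)$ with $A\neq 1$ (by $F\not\equiv G$), and evaluating at any common zero of $f^{(k)}$ and $g^{(k)}$ would force $A=1$; hence $0$ is omitted by $f^{(k)}$ and the inequality holds trivially. Without these pieces the claimed estimate does not follow from what you wrote.
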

\begin{proof}
Case-1 $\Phi=0$\\
Then by integration we get $$F-1=A(G-1)$$
where $A$ is non-zero constant. Since $F\not\equiv G$. we have $A\not= 1$.Thus $0$ is an e.v.P. of $f^{(k)}$ and $g^{(k)}$ and hence the lemma  follows immediately.\\
Case-2 $\Phi\not=0$\\
Let $z_{0}$ be a zero of $f^{(k)}$ of order $t(\geq q+1)$. Then it is a zero of $F$ of order atleast $(q+1)(n-2m)$ and hence $z_{0}$ is the zero of $\Phi$ of order at least $q(n-2m)+n-2m-1$. Thus\\
\beas & &\{(n-2m)q+n-2m-1\}\;\ol N(r,0;f^{(k)}\mid\geq q+1)\\
&\leq& N(r,0;\Phi)\\
&\leq& T(r,\Phi)+O(1)\\
&\leq& N(r,\infty;\Phi)+S(r)\\
&\leq& \ol{N}(r,\infty;f^{(k)})+\ol{N}(r,\infty;g^{(k)})+\ol{N}_{*}(r,1;F,G)+S(r).\eeas
\end{proof}
\begin{lem}\label{biku} Let $H$ be defined previously. If $H\equiv 0$ and $n\geq 4m+2$ with $\gcd\{m,n\}=1$ then $f^{(k)}\equiv g^{(k)}$ for any integer $k\geq0$.
\end{lem}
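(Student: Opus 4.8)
The plan is to convert $H\equiv 0$ into a M\"obius relation between $F$ and $G$, to eliminate every possibility except $F\equiv G$, and then to apply Lemma \ref{bd2}. Since $H=\frac{d}{dz}\log\frac{F'}{(F-1)^{2}}-\frac{d}{dz}\log\frac{G'}{(G-1)^{2}}$, integrating $H\equiv 0$ twice will give
$$\frac{1}{F-1}=\frac{A}{G-1}+B,\qquad A\neq 0 ,$$
so $F$ is a non-degenerate M\"obius transform of $G$ and $T(r,F)=T(r,G)+O(1)$. As $F$ and $G$ are polynomials of degree $n$ in $f^{(k)}$ and $g^{(k)}$, Mokhon'ko's lemma gives $T(r,F)=nT(r,f^{(k)})+O(1)$ and $T(r,G)=nT(r,g^{(k)})+O(1)$, hence $T(r,f^{(k)})=T(r,g^{(k)})+O(1)$ and all error terms reduce to $S(r)$. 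I will show that, under $n\geq 4m+2$, the only surviving case is $F\equiv G$, and then Lemma \ref{bd2} --- applicable since $n\geq 4m+2\geq 2m+4$ --- forces $f^{(k)}\equiv g^{(k)}$.

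The elimination runs on the following value--distribution dictionary. By Lemma \ref{b0} (where $\gcd\{m,n\}=1$ and the standing hypotheses on $c$ are used) the zeros $w_{1},\dots,w_{n}$ of $P$ are simple, and $Q(z)=z^{2m}-\frac{2n}{n-m}z^{m}+\frac{n}{n-2m}$ has $2m$ distinct non-zero roots $\gamma_{1},\dots,\gamma_{2m}$; therefore
\beas
& &\ol N(r,\infty;F)=\ol N(r,\infty;f^{(k)}),\qquad \ol N(r,1;F)=\sum_{j=1}^{n}\ol N(r,w_{j};f^{(k)}),\\
& &\ol N(r,0;F)=\ol N(r,0;f^{(k)})+\sum_{i=1}^{2m}\ol N(r,\gamma_{i};f^{(k)}),
\eeas
each pole of $f^{(k)}$ giving a pole of $F$ of order a multiple of $n$ and each zero of $f^{(k)}$ a zero of $F$ of order at least $n-2m$, with the analogous relations for $g$, $G$. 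Since $P(0)=P(\gamma_{i})=c\neq 0$, the $n+2m+1$ values $w_{j},\gamma_{i},0$ are pairwise distinct, and the dictionary yields $\Theta(\infty;F)\geq 1-\frac1n$, $\Theta(0;F)\geq 1-\frac{2m+1}{n}$ and the same for $G$.

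Next I would split on $B$. If $B=-1$ and $A=-1$ the relation collapses to $FG\equiv 1$, excluded by Lemma \ref{bb3} as $n\geq 4m+2\geq 5$; if $B=-1$, $A\neq -1$, it gives $F=A/(A+1-G)$, so $F$ omits $0$, forcing $f^{(k)}$ to omit (up to $S(r)$) the $2m+1\;(\geq 3)$ values $0,\gamma_{1},\dots,\gamma_{2m}$, against the Second Fundamental Theorem. If $B\neq 0,-1$, the poles of $F$ lie exactly over a single value $\delta=(B-A)/B$ of $G$, and the poles of $G$ over a single value $\eta=(B+1)/B\neq 0$ of $F$; if $\delta\neq 0$ then $\ol N(r,\delta;G)=\ol N(r,\infty;F)=\ol N(r,\infty;f^{(k)})\leq\frac1n T(r,G)+O(1)$, giving $\Theta(0;G)+\Theta(\delta;G)+\Theta(\infty;G)\geq 3-\frac{2m+3}{n}>2$ because $n\geq 4m+2>2m+3$, contradicting the defect relation; if $\delta=0$ the same runs with $F,\eta$ in place of $G,\delta$. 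If $B=0$, then $G=AF+(1-A)$: for $A=1$ this is $F\equiv G$ and we are done, while for $A\neq 1$ one has $F$, $G$ sharing $\infty$ and $1$ CM, $F$ vanishing exactly where $G=1-A\notin\{0,1,\infty\}$ and $G$ exactly where $F=(A-1)/A\notin\{0,1,\infty\}$, and every zero of $f^{(k)}$ or $g^{(k)}$ a zero of $F$ or $G$ of order at least $n-2m\;(\geq 2m+2)$; feeding these into the Second Fundamental Theorem for $f^{(k)}$ and for $g^{(k)}$ with the target values $0$, the $\gamma_{i}$, $\infty$ and the preimages of $1$ and of the auxiliary values, and summing, should yield an estimate incompatible with $T(r)\not\equiv O(1)$ once $n\geq 4m+2$, ruling out $A\neq 1$.

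Hence $F\equiv G$, and Lemma \ref{bd2} finishes the argument. I expect the main obstacle to be precisely the sub-case $B=0$, $A\neq 1$: the crude counting reaches only $n>4m+3$, and to bring it down to $n\geq 4m+2$ one must combine the CM sharing of $1$ and $\infty$ with the multiplicity gain $n-2m$ at the zeros, together with the fact that $P-cA$ --- by critical injectivity of $P$ --- has at most one multiple zero, which is where the full strength of the restrictions on $c$ is needed. Keeping track of which of $0$, the $\gamma_{i}$, the $w_{j}$ and the auxiliary values may coincide, in step with the Second Fundamental Theorem, is the delicate part; the remaining cases are routine Picard-type or defect-relation arguments.
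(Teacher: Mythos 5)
Your overall strategy coincides with the paper's: integrate $H\equiv 0$ to a M\"obius relation between $F$ and $G$, eliminate every possibility except $F\equiv G$ by Second Fundamental Theorem/Picard-type counting (using Lemma \ref{bb3} to kill $FG\equiv 1$), and finish with Lemma \ref{bd2}. Your treatment of the cases $B=-1$ (both $A=-1$, giving $FG\equiv1$, and $A\neq-1$, where $F$ omits $0$ so $f^{(k)}$ omits the $2m+1\geq 3$ values $0,\gamma_1,\dots,\gamma_{2m}$) and of the genuinely fractional case $B\neq 0,-1$ (via the deficiency relation with the three values $0$, $\delta$ or $\eta$, and $\infty$, needing only $n>2m+3$) is correct, and in those cases your defect-relation bookkeeping is if anything cleaner than the paper's. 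But there is a genuine gap exactly where you flag one: the affine subcase $B=0$, $A\neq 1$ (the paper's Subcase 2.2, $F=\lambda G+\mu$ with $\lambda+\mu=1$, $\lambda\neq 0,1$) is not proved. As you yourself compute, the straightforward count with the values $0$, the preimage of the zeros of the other function, and $\infty$ only yields $n\leq 4m+2+\frac{1}{k+1}$, i.e.\ it rules the case out only for $n\geq 4m+3$ (and only for $n\geq 4m+4$ when $k=0$), whereas the lemma is claimed for $n\geq 4m+2$. The mechanism you gesture at (critical injectivity, ``$P-cA$ has at most one multiple zero'') is not developed and is not what closes this margin.

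What the paper actually does in this subcase is introduce the auxiliary value $\xi=\frac{1-\frac{2n}{n-m}+\frac{n}{n-2m}}{c}=\frac{2m^{2}}{(n-m)(n-2m)c}\neq 0$, for which $F+\xi=\frac{P(1)-P(f^{(k)})}{c}=(f^{(k)}-1)^{3}Q_{n-3}(f^{(k)})$ with $Q_{n-3}(1)\neq 0$; hence $\overline N(r,-\xi;F)\leq \overline N(r,1;f^{(k)})+(n-3)T(r,f^{(k)})$, so the value $-\xi$ carries a deficiency of roughly $\frac{2}{n}$. Applying the Second Fundamental Theorem with the \emph{four} values $\infty$, $0$, $-\frac{1-\lambda}{\lambda}$ (whose preimages are the zeros of $F$, resp.\ of $G$) and $-\xi$ then gives a contradiction precisely for $n>4m+\frac{1}{k+1}$, i.e.\ for $n\geq 4m+2$; the argument splits according to whether $\frac{1-\lambda}{\lambda}=\xi$ or not, and in the former case the standing hypothesis $c\neq-\frac{1}{2}\bigl(1-\frac{2n}{n-m}+\frac{n}{n-2m}\bigr)$ is exactly what forces $\lambda\neq-1$ and keeps the four target values distinct. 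Without this auxiliary-value step your argument establishes the lemma only under the stronger restriction $n\geq 4m+3$ (resp.\ $4m+4$ for $k=0$), so the proposal as written does not prove the statement.
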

\begin{proof}
In this case $F$ and $G$ share $(1,\infty)$.\par
Now by integration we have
\bea\label{pe1.1} F=\frac{AG+B}{CG+D},\eea
where $A,B,C,D$ are constant satisfying $AD-BC\neq 0 $.\par
Thus by Mokhon'ko's Lemma (\cite{11.1}) \bea\label{pe1.2} T(r,f^{(k)})=T(r,g^{(k)})+S(r).\eea

As $AD-BC\neq0$, so $A=C=0$ never occur. Thus we consider the following cases:\\
\textbf{Case-1} $AC\neq0$\\
In this case \bea F-\frac{A}{C}=\frac{BC-AD}{C(CG+D)}.\eea
So, $$\overline{N}(r,\frac{A}{C};F)=\overline{N}(r,\infty;G).$$
Now by using the Second Fundamental Theorem and (\ref{pe1.2}), we get
\beas && n T(r,f^{(k)})+O(1)=T(r,F)\\
 &\leq& \overline{N}(r,\infty;F)+\overline{N}(r,0;F)+\overline{N}(r,\frac{A}{C};F)+S(r,F)\\
&\leq& \overline{N}(r,\infty;f^{(k)})+\overline{N}(r,0;f^{(k)})+2mT(r,f^{(k)})+\overline{N}(r,\infty;g^{(k)})+S(r,f^{(k)})\\
&\leq& (2m+1+\frac{2}{k+1})T(r,f^{(k)})+S(r,f^{(k)}),\eeas
which is a contradiction as $n\geq 4m+2$.\\

\textbf{Case-2} $AC=0$\\
\textbf{Subcase-2.1} $A=0$ and $C\neq0$\\
In this case $B\neq0$ and
$$F=\frac{1}{\gamma G+\delta},$$
where $\gamma=\frac{C}{B}$ and $\delta=\frac{D}{B}$.\\
If $F$ has no $1$-point,  then by using the Second Fundamental Theorem and (\ref{pe1.2}),\\ we get
\beas &&T(r,F)\\
 &\leq& \overline{N}(r,\infty;F)+\overline{N}(r,0;F)+\overline{N}(r,1;F)+S(r,F)\\
&\leq& \overline{N}(r,\infty;f^{(k)})+\overline{N}(r,0;f^{(k)})+2mT(r,f^{(k)})+S(r,f^{(k)})\\
&\leq& \frac{2m+1+\frac{1}{k+1}}{n}T(r,F)+S(r,F),\eeas
which is a contradiction as $n\geq 4m+2$.\par
Thus $\gamma+\delta=1$ and $\gamma\neq0$.\par
So, $$F=\frac{1}{\gamma G+1-\gamma},$$
From above we get $\overline{N}(r,0;G+\frac{1-\gamma}{\gamma})=\overline{N}(r,\infty;F)$.\\
If $\gamma\neq1$, by using the Second Fundamental Theorem and (\ref{pe1.2}), we get
\beas && T(r,G)\\
 &\leq& \overline{N}(r,\infty;G)+\overline{N}(r,0;G)+\overline{N}(r,0;G+\frac{1-\gamma}{\gamma})+S(r,G)\\
&\leq& \overline{N}(r,\infty;g^{(k)})+\overline{N}(r,0;g^{(k)})+2mT(r,g^{(k)})+\overline{N}(r,\infty;f^{(k)})+S(r,g^{(k)})\\
&\leq& \frac{2m+1+\frac{2}{k+1}}{n}T(r,F)+S(r,F),\eeas
which is a contradiction as $n\geq 4m+2$.\par

Thus $\gamma=1$ and $FG\equiv 1$ which is not possible by Lemma \ref{bb3}.\\
\textbf{Subcase-2.2} $A\neq0$ and $C=0$\par
In this case $D\neq0$ and
$$F=\lambda G+\mu,$$
where $\lambda=\frac{A}{C}$ and $\mu=\frac{B}{D}$.\par
If $F$ has no $1$ point then similarly as above we get a contradiction.\par
Thus $\lambda+\mu=1$ with $\lambda\neq0$.\par
Clearly $\overline{N}(r,0;G+\frac{1-\lambda}{\lambda})=\overline{N}(r,0;F)$.\par
Let $\lambda \neq1$ and $\xi=\frac{(1-\frac{2n}{n-m}+\frac{n}{n-2m})}{c}$.
Then $F+\xi=(f^{(k)}-1)^{3}Q_{n-3}(f^{(k)})$,\\ where $Q_{n-3}(1)\not= 0$ and $Q_{n-3}(z)$ is a $(n-3)$ degree polynomial.\par
If $\frac{1-\lambda}{\lambda}\not=\xi$, then by using the Second Fundamental Theorem and (\ref{pe1.2}), we get
\beas &&2T(r,G)\\
 &\leq& \overline{N}(r,\infty;G)+\overline{N}(r,0;G)+\overline{N}(r,0;G+\frac{1-\lambda}{\lambda})+\overline{N}(r,0;G+\xi)+S(r,G)\\
&\leq& \overline{N}(r,\infty;g^{(k)})+\overline{N}(r,0;g^{(k)})+2mT(r,g^{(k)})+\overline{N}(r,0;f^{(k)})+2mT(r,f^{(k)})\\
&+& \overline{N}(r,1;g^{(k)})+(n-3)T(r,g^{(k)})+S(r,g^{(k)})\\
&\leq& \frac{4m+n+\frac{1}{k+1}}{n}T(r,G)+S(r,G),\eeas
which is a contradiction as $n\geq 4m+2$.\par
If $\frac{1-\lambda}{\lambda}=\xi$, then $\lambda G=F-\lambda\xi$. As $c\not=-\frac{1-\frac{2n}{n-m}+\frac{n}{n-2m}}{2}$ so $\lambda\not=-1$.\par
Now applying the Second Fundamental Theorem and (\ref{pe1.2}), we get
\beas && 2T(r,F)\\
 &\leq& \overline{N}(r,\infty;F)+\overline{N}(r,0;F)+\overline{N}(r,0;F-\lambda\xi)+\overline{N}(r,0;F+\xi)+S(r,F)\\
&\leq& \overline{N}(r,\infty;f^{(k)})+\overline{N}(r,0;g^{(k)})+2mT(r,g^{(k)})+\overline{N}(r,0;f^{(k)})+2mT(r,f^{(k)})\\
&+& \overline{N}(r,1;f^{(k)})+(n-3)T(r,f^{(k)})+S(r,g^{(k)})\\
&\leq& \frac{4m+n+\frac{1}{k+1}}{n}T(r,F)+S(r,F),\eeas
which is a contradiction as $n > 4m+1$.\par
Thus $\lambda=1$ and $F\equiv G$.
Consequently by Lemma \ref{bd2}, $f^{(k)}\equiv g^{(k)}$ .
\end{proof}
\section {Proof of the theorems}

\begin{proof} [\textbf{Proof of Theorem\ref{thB3} }]
It is clear that $\overline{N}(r,\infty;f^{(k)})\leq\frac{1}{k+1}N(r,\infty;f^{(k)})$.\\
\textbf{Case-1} $H \not\equiv 0$\\
Clearly $F'=-\frac{n}{c}(f^{(k)})^{n-2m-1}((f^{(k)})^{m}-1)^{2}(f^{(k+1)})$, \\and  $G'=-\frac{n}{c}(g^{(k)})^{n-2m-1}((g^{(k)})^{m}-1)^{2}(g^{(k+1)})$.
\\Now by simple calculations,
\bea\nonumber\label{cb1} && N(r,\infty;H)\\
\nonumber &\leq& \ol{N}(r,0;F|\geq2)+\ol{N}(r,0;G|\geq2)+\ol{N}(r,\infty;F)\\
\nonumber&+& \ol{N}(r,\infty;G)+\ol{N}_{*}(r,1;F,G)+\ol{N}_{0}(r,0;F')+\ol{N}_{0}(r,0;G'),\eea
where $\ol{N}_{0}(r,0;F')$ is the reduced counting function of zeros of $F'$ which is not zeros of $F(F-1)$.
Thus
\bea\label{cb2} &&N(r,\infty;H)\\
\nonumber &\leq& \ol{N}(r,0;f^{(k)})+\ol{N}(r,0;g^{(k)})+\ol{N}(r,0;((g^{(k)})^{m}-1))\\
\nonumber&+&\ol{N}(r,0;((f^{(k)})^{m}-1))+\ol{N}(r,\infty;f^{(k)})+\ol{N}(r,\infty;g^{(k)})\\
\nonumber&+&\ol{N}_{*}(r,1;F,G)+\ol{N}_{0}(r,0;f^{(k+1)})+\ol{N}_{0}(r,0;g^{(k+1)}),\eea
where $\ol{N}_{0}(r,0;f^{(k+1)})$ is the reduced counting function of zeros of $f^{(k+1)}$ which is not zeros of $f^{(k)}((f^{(k)})^m-1)$ and $(F-1)$.

Clearly \bea \label{cb2.5} \overline{N}(r,1;F|=1)=\overline{N}(r,1;G|=1)\leq N(r,\infty;H).\eea
Now by using the Second Fundamental Theorem, (\ref{cb2}), (\ref{cb2.5}) and Lemma \ref{b4} we get\\
\bea\label{n2}&& (n+m)(T(r,f^{(k)})+T(r,g^{(k)}))\\
\nonumber &\leq& \overline{N}(r,\infty;f^{(k)})+\overline{N}(r,0;f^{(k)})+\overline{N}(r,\infty;g^{(k)})+\overline{N}(r,0;g^{(k)})\\
\nonumber &+& \overline{N}(r,1;F)+\overline{N}(r,1;G)+\ol{N}(r,0;(f^{(k)})^{m}-1)+\ol{N}(r,0;(g^{(k)})^{m}-1)\\
\nonumber&-& N_{0}(r,0,f^{(k+1)})-N_{0}(r,0,g^{(k+1)})+S(r,f^{(k)})+S(r,g^{(k)})\\
\nonumber &\leq& 2\{\overline{N}(r,\infty;f^{(k)})+\overline{N}(r,\infty;g^{(k)})\}+2\{\overline{N}(r,0;f^{(k)})+\overline{N}(r,0;g^{(k)})\\
\nonumber &+& \ol{N}(r,0;((g^{(k)})^{m}-1))+\ol{N}(r,0;((f^{(k)})^{m}-1))\}+\overline{N}(r,1;F)+\overline{N}(r,1;G)\\
\nonumber &-&\overline{N}(r,1;F|=1)+\ol{N}_{*}(r,1;F,G)+S(r,f^{(k)})+S(r,g^{(k)}).
\eea
\bea\label{n3}&& (\frac{n}{2}-m)(T(r,f^{(k)})+T(r,g^{(k)}))\\
\nonumber &\leq& 2\{\overline{N}(r,\infty;f^{(k)})+\overline{N}_{*}(r,\infty;g^{(k)})+\overline{N}(r,0;f^{(k)})+\overline{N}(r,0;g^{(k)})\}\\
\nonumber &+&(\frac{3}{2}-l)\ol{N}_{*}(r,1;F,G)+S(r,f^{(k)})+S(r,g^{(k)}).
\eea
That is
\bea\label{n3.51}&& (\frac{n}{2}-m-2-\frac{2}{k+1})(T(r,f^{(k)})+T(r,g^{(k)}))\\
\nonumber &\leq& (\frac{3}{2}-l)\ol{N}_{*}(r,1;F,G)+S(r,f^{(k)})+S(r,g^{(k)}).
\eea
\textbf{Subcase-1.1} $l\geq2$\\
We get a contradiction from (\ref{n3.51}) when $n>2m+4+\frac{4}{k+1}$.\\

\textbf{Subcase-1.2} $l=1$\\

In this case
\beas && N_{*}(r,1;F,G)=\ol{N}_{L}(r,1;F)+\ol{N}_{L}(r,1;G)\\
&\leq& \frac{1}{2}(N(r,0;f^{(k+1)}|f^{(k)}\neq0)+N(r,0;g^{(k+1)}|g^{(k)}\neq0))\\
&\leq& \frac{1}{2}(\overline{N}(r,\infty;f^{(k)})+\overline{N}(r,0;f^{(k)})+\overline{N}(r,\infty;g^{(k)})+\overline{N}(r,0;g^{(k)}))\\
&+& S(r,f^{(k)})+S(r,g^{(k)})\\
&\leq& \frac{1}{2}(1+\frac{1}{k+1})(T(r,f^{(k)})+T(r,g^{(k)}))+S(r,f^{(k)})+S(r,g^{(k)}).
\eeas
Thus (\ref{n3.51}) becomes
\bea\label{n3.5}&& (\frac{n}{2}-m-2-\frac{2}{k+1})(T(r,f^{(k)})+T(r,g^{(k)}))\\
\nonumber &\leq&  \frac{1}{4}(1+\frac{1}{k+1})(T(r,f^{(k)})+T(r,g^{(k)}))+S(r,f^{(k)})+S(r,g^{(k)}),
\eea
which is a contradiction when $n>2m+4.5+\frac{4.5}{k+1}$.\\
\textbf{Subcase-1.3} $l=0$\\
In this case \beas && N_{*}(r,1;F,G)=\ol{N}_{L}(r,1;F)+\ol{N}_{L}(r,1;G)\\
&\leq& (N(r,0;f^{(k+1)}|f^{(k)}\neq0)+N(r,0;g^{(k+1)}|g^{(k)}\neq0))\\
&\leq& (\overline{N}(r,\infty;f^{(k)})+\overline{N}(r,0;f^{(k)})+\overline{N}(r,\infty;g^{(k)})+\overline{N}(r,0;g^{(k)}))\\
&+& S(r,f^{(k)})+S(r,g^{(k)})\\
&\leq& (\overline{N}(r,\infty;f^{(k)})+\overline{N}(r,0;f^{(k)})+\overline{N}(r,\infty;g^{(k)})+\overline{N}(r,0;g^{(k)}))\\
&+& S(r,f^{(k)})+S(r,g^{(k)})\\
&\leq& (1+\frac{1}{k+1})(T(r,f^{(k)})+T(r,g^{(k)}))+S(r,f^{(k)})+S(r,g^{(k)}).
\eeas
Thus (\ref{n3.51}) becomes
\bea\label{n3.5}&& (\frac{n}{2}-m-2-\frac{2}{k+1})(T(r,f^{(k)})+T(r,g^{(k)}))\\
\nonumber &\leq&  \frac{3}{2}(1+\frac{1}{k+1})(T(r,f^{(k)})+T(r,g^{(k)}))+S(r,f^{(k)})+S(r,g^{(k)}),
\eea
which is a contradiction when $n>2m+7+\frac{7}{k+1}$.\\
\textbf{Case-2} $H\equiv 0$\\
From the Lemma \ref{biku} we obtained $f^{(k)}\equiv g^{(k)}$ when $n\geq 4m+2$.
\end{proof}

\begin{proof} [\textbf{Proof of Theorem\ref{thB5} }]
\textbf{Case-1} $H \not\equiv 0$\\
Then clearly $F\not\equiv G$.\\
As $f$ and $g$ share $(0,q)$, we have\\
\bea\label{m1} && N(r,\infty;H)\\
\nonumber &\leq& \ol{N}(r,\infty;f^{(k)})+\ol{N}(r,\infty;g^{(k)})+\ol{N}(r,0;((g^{(k)})^{m}-1))\\
\nonumber &+&\ol{N}(r,0;((f^{(k)})^{m}-1))+\ol{N}_{*}(r,0;f^{(k)},g^{(k)})\\
\nonumber &+&\ol{N}_{*}(r,1;F,G)+\ol{N}_{0}(r,0;f^{(k+1)})+\ol{N}_{0}(r,0;g^{(k+1)}),\eea
where $\ol{N}_{0}(r,0;f^{(k+1)})$ is the reduced counting function of zeros of $f^{(k+1)}$ which is not zeros of $f^{(k)}((f^{(k)})^m-1)$ and $(F-1)$.\\
Now using the Second Fundamental Theorem, (\ref{cb2.5}), (\ref{m1}) and Lemma \ref{b4} we get\\
\bea\label{m3}&& (\frac{n}{2}-m)(T(r,f^{(k)})+T(r,g^{(k)}))\\
\nonumber &\leq& 2\overline{N}(r,0;f^{(k)})+\overline{N}_{*}(r,0;f^{(k)},g^{(k)})+2\{\overline{N}(r,\infty;f^{(k)})+\overline{N}(r,\infty;g^{(k)})\}\\
\nonumber &+&(\frac{3}{2}-l)\ol{N}_{*}(r,1;F,G)+S(r,f^{(k)})+S(r,g^{(k)})\\
\nonumber &\leq& 2\overline{N}(r,0;f^{(k)})+\overline{N}(r,0;f^{(k)}|\geq q+1)+2\{\overline{N}(r,\infty;f^{(k)})+\overline{N}(r,\infty;g^{(k)})\}\\
\nonumber &+&(\frac{3}{2}-l)\ol{N}_{*}(r,1;F,G)+S(r,f^{(k)})+S(r,g^{(k)}).
\eea
Thus by the help of Lemma \ref{b7} we have\\
\bea\label{m4} && (\frac{n}{2}-m-\frac{2}{k+1})(T(r,f^{(k)})+T(r,g^{(k)}))\\
\nonumber &\leq& 2\overline{N}(r,0;f^{(k)})+\overline{N}(r,0;f^{(k)}|\geq q+1)+(\frac{3}{2}-l)\ol{N}_{*}(r,1;F,G)\\
\nonumber &+& S(r,f^{(k)})+S(r,g^{(k)})\\
\nonumber &\leq& (\frac{2}{(k+1)(n-2m-1)}+\frac{1}{(k+1)((n-2m)q+n-2m-1)})\{T(r,f^{(k)})\\
\nonumber &+& T(r,g^{(k)})\}+(\frac{2}{n-2m-1}+\frac{1}{(n-2m)q+n-2m-1}+\frac{3}{2}-l)\ol{N}_{*}(r,1;F,G)\\
\nonumber &+& S(r,f^{(k)})+S(r,g^{(k)}).
\eea
Thus when $l\geq \frac{3}{2}+\frac{2}{n-2m-1}+\frac{1}{(n-2m)q+n-2m-1}, $\\
and  $n>2m+\frac{4}{k+1}+\frac{4}{(k+1)(n-2m-1)}+\frac{2}{(k+1)((n-2m)q+n-2m-1)}$,\\
we get a contradiction from (\ref{m4}).\\

\textbf{Case-2} $H\equiv 0$\\
From the Lemma \ref{biku} we obtained $f^{(k)}\equiv g^{(k)}$ when $n\geq 4m+2$.
\end{proof}
\section{\textbf{Acknowledgement}} We would like to thank the editor and the referees.

\end{document}